\newcommand{\Homeo}{\mathrm{Homeo}}
\newcommand{\OB}{\mathrm{OB}}
\newcommand{\ROB}{\mathrm{ROB}}
\newcommand{\Mod}{\mathrm{Mod}}
\newcommand{\MT}{\mathcal{MT}}
\newcommand{\Z}{\mathbb{Z}}
\renewcommand{\O}{\mathcal{O}}
\renewcommand{\P}{\mathcal{P}}
\DeclareMathOperator{\lcm}{lcm}
\newtheorem{theorem}{Theorem}[section]
\newtheorem{prop}[theorem]{Proposition}
\newtheorem{lemma}[theorem]{Lemma}
\newtheorem{cor*}{Corollary}
\newtheorem{prop*}{Proposition}
\theoremstyle{definition}
\newtheorem{definition}[theorem]{Definition}
\newtheorem{remark}[theorem]{Remark}
\newtheorem{exmp}[theorem]{Example}
\begin{document}
\title{Periodic surface homeomorphisms and contact structures}
	
	\subjclass{Primary 57K33, 57D20; Secondary 57M50}
	\keywords{periodic map, rational open book, contact structure.}
	
	\author{Dheeraj Kulkarni}
	\address{Department of Mathematics\\
Indian Institute of Science Education and Research Bhopal\\
Bhopal Bypass Road, Bhauri \\
Bhopal 462 066, Madhya Pradesh\\
India}
	\email{dheeraj@iiserb.ac.in}
	\urladdr{https://sites.google.com/a/iiserb.ac.in/homepage-dheeraj-kulkarni}
	
	\author{Kashyap Rajeevsarathy}
	\address{Department of Mathematics\\
Indian Institute of Science Education and Research Bhopal\\
Bhopal Bypass Road, Bhauri \\
Bhopal 462 066, Madhya Pradesh\\
India}
	\email{kashyap@iiserb.ac.in}
	\urladdr{https://home.iiserb.ac.in/$\sim$lkashyap}
	
	\author{Kuldeep Saha}
	\address{Department of Mathematics\\
Indian Institute of Science Education and Research Bhopal\\
Bhopal Bypass Road, Bhauri \\
Bhopal 462 066, Madhya Pradesh\\
India}
	\email{kuldeep.saha@gmail.com}
	
\begin{abstract} 
Periodic surface homemorphisms (diffeomorphisms) play a significant role in the the Nielsen-Thurston classification of surface homeomorphisms. Periodic surface homeomorphisms can be 
described (up to conjugacy) by using data sets which are combinatorial objects.  In this article, we start by associating a rational open book to a slight modification of a given data set, called marked data set. It is known that  every rational open book supports a contact structure. Thus, we can associate a contact structure to a periodic map and study the properties of it in terms combinatorial conditions on marked data sets.

 In particular, we prove that a class of data sets, satisfying easy-to-check combinatorial hypothesis, gives rise to Stein fillable contact structures.
In addition to the above, we prove an analogue of Mori's construction of explicit symplectic filling for rational open books.  We also prove a sufficient condition for Stein fillability of rational open books analogous to the positivity of monodromy in honest open books as in the result of Giroux and Loi-Piergallini.
\end{abstract}
	
\maketitle
	
\section{Introduction}
\label{sec:intro}
The set of periodic surface homeomorhphisms is an important class in the scheme of Nielsen-Thurston classfication theory of surface homeomorphisms. The conjugacy class of a periodic surface homeomorphism can be efficiently encoded~\cite{PRS,RV} by a combinatorial tuple of integers called a \emph{data set}. More specifically, consider a homeomorphism $h$ of order $n$ on a closed orientable surface $\Sigma$ that generates an action with $\ell$ distinct nontrivial orbits of sizes $n/n_i$ where for $1 \leq i \ell$, the induced local rotation angle is $2\pi c_i^{-1}/n_i$ with $\gcd(c_i,n_i)=1$, and whose quotient orbifold has genus $g_0$. Then $f$ has an associated data set given by $D_h = (n,g_0,r;(c_1,n_1),\ldots,(c_{\ell},n_{\ell})),$ where $n$ is called the \textit{degree} of $D_h$. The parameter $r$ comes into play only when $f$ is a free rotation of the surface by $2\pi r/n$. For example, the hypereliptic involution on the torus, which has $4$ fixed points (as illustrated in Figure~\ref{Fig 0} below) with an induced local rotation of $\pi$ around each point, is encoded by the data set $(2,0 ;(1,2),(1,2),(1,2),(1,2))$.
	\begin{figure}[H] \label{Fig 0}
		\centering
	\def\svgwidth{\textwidth}
	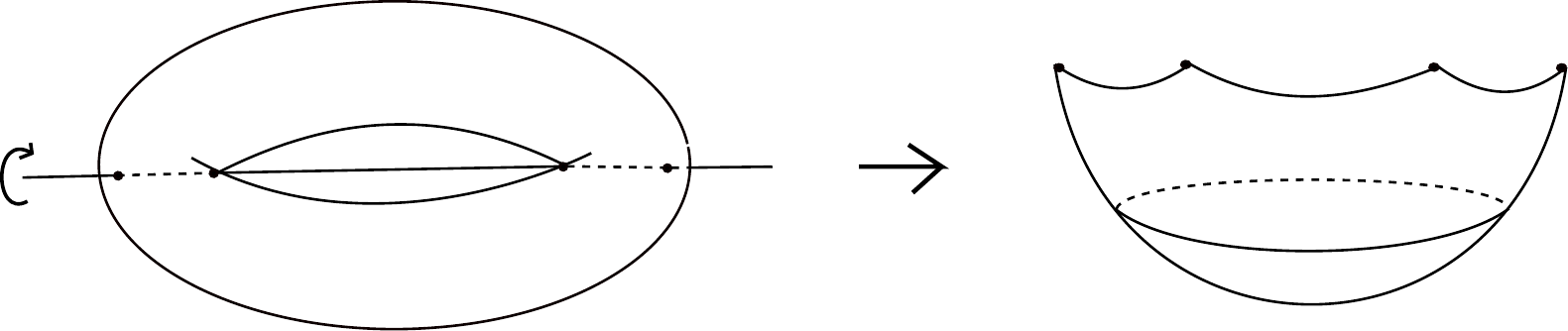
	\caption{The hyperelliptic involution on a torus.}
	\end{figure} 

The open book decomposition of a manifold has proved to be a fundamental tool for development of contact topology. Roughly speaking, an open book is a decomposition of a manifold into a co-dimension $1$ mapping torus (see Section~\ref{sec:prelims} for details) and a tubular neighborhood of a codimension $2$ submanifold, called the \textit{binding}. The fiber of the mapping torus is called the \emph{page}. The return map or \emph{monodromy} of the fibration over $S^1$ is a diffeomorphism of the page relative to its boundary. It is known~\cite{W} that all odd-dimensional manifolds admit open book decompositions.
	
Thurston-Winkelnkemper \cite{TW} showed how to associate a contact structure to an open book with a symplectic page and with a relative symplectomorphism as monodromy. Subsequently, a remarkable result of Giroux \cite{Gi} in dimension three showed that every contact structures arises in this way. Moreover, for $3$-manifolds, Giroux established a correspondence between contact structures up to isotopy and open books up to positive stabilization. Baker-Etnyre-van Horn-Morris \cite{BEM} defined a generalization of $3$-dimensional open books called \emph{rational open books}. While the monodromy of a \emph{honest} open book is identity near the boundary, a rational open book may have rotations along boundary components of the page. It was shown in \cite{BEM} that one can extend the Thurston-Winkelnkemper construction to the setting of rational open books and associate a unique contact structure to a rational open book. 

It is an interesting pursuit to analyze the contact topological properties of rational open books in relation to their monodromies. In this paper, we discuss a natural family of rational open books arising from periodic surface homeomorphisms and study the fillability properties of the corresponding contact structures. 
	
We now describe how to arrive at a contact structure starting from a periodic surface homeomorphism. We go back to our earlier example of the periodic homeomorphism $h$. By removing (cyclically permuted) $h$-invariant open disks around (points in) a set $S$ of nontrivial orbits of a periodic homeomorphism, we obtain a periodic map $\hat{h}$ on a subsurface $(\hat{\Sigma},\partial \hat{\Sigma})$ with corresponding boundary. One can then take the rational open book which has $\hat{\Sigma}$ as page and $\hat{h}$ as monodromy, and consider the contact structure associated to this rational open book, as described in \cite{BEM}. Thus, we see that by removing rotating open disks around the orbits of a cyclic action (encoded by $D_h$), we can get a contact rational open book. In order to make such an association well-defined, the $D_h$ should also include the information on the specific orbits in $S$ around which disks were removed. Note that each pair $(c_j,n_j)$ in the multiset $\{(c_1,n_1),\ldots,(c_{\ell},n_{\ell})\}$ corresponds to a distinguished orbit of the $\langle f\rangle$-action. Thus, we modify $D_h$ to a \textit{marked data set}, which is a tuple of the form
$$\hat{D}_h = (n_{\pm},g_0,r;(c_1,n_1),\ldots,(c_{\ell},n_{\ell}),[j_1,\ldots,j_k] ),$$ where $\{j_1,\ldots, j_k\} \subset \{1,\ldots \ell\}$ and the (disjoint) union of the orbits that the $(c_{j_i},n_{j_i})$ correspond to, equals $S$. For example, assuming that the four fixed points hyperlliptic involution (Figure \ref{Fig 0}) are marked $1$-$4$,  and say we remove three disks around points $1, 2$ and $4$, then the marked data set is given by: $(n,0; (1,2), (1,2), [1,2,4])$. The suffix $\pm$ for the parameter $n$ is added to distinguish between a positive (i.e clockwise) and a negative (i.e counterclockwise) local rotation around a distinguished fixed point (or orbit). We show the following result in Section \ref{marked data sets}. 

\begin{prop}\label{association thm}
Given a marked data set $\hat{D}$, one can associate a unique contact structure to it.
\end{prop}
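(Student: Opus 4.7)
The plan is to build a rational open book from $\hat{D}$ and then apply the Baker--Etnyre--van Horn-Morris extension~\cite{BEM} of the Thurston--Winkelnkemper construction to extract the contact structure; the bulk of the work is showing that the output depends only on $\hat{D}$ and not on the auxiliary choices.

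First I would invoke the standard correspondence between data sets and conjugacy classes of finite order orientation-preserving surface homeomorphisms (as in~\cite{PRS,RV}) to recover from the unmarked tuple $(n,g_0,r;(c_1,n_1),\dots,(c_\ell,n_\ell))$ a periodic map $h \in \Homeo^+(\Sigma)$ of order $n$, well-defined up to conjugation in $\Homeo^+(\Sigma)$. By averaging a Riemannian metric over $\langle h\rangle$, I can arrange that $h$ acts by isometries, so around each point of each nontrivial orbit there is an $\langle h\rangle$-invariant round disk on which the stabilizer of the point acts by a rigid rotation whose angle is prescribed by the pair $(c_j,n_j)$.

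Next I would use the marking $[j_1,\dots,j_k]$ to select, for each $j_i$, one such invariant disk at a representative of the corresponding orbit, and delete the disk together with its $\langle h\rangle$-translates. This yields a compact subsurface $\hat{\Sigma}\subset\Sigma$ with $\sum_i n/n_{j_i}$ boundary circles and a periodic map $\hat{h}:=h|_{\hat{\Sigma}}$ that rotates each boundary circle rigidly. The sign decoration encoded by $n_\pm$ fixes the clockwise/counterclockwise convention for the boundary rotation at each distinguished orbit, so $\hat{h}$ is a bona fide rational open book monodromy in the sense of~\cite{BEM}; completing its mapping torus with solid tori along the removed orbits gives a rational open book decomposition of a closed $3$-manifold $M$, and the construction of~\cite{BEM} then assigns a supported contact structure $\xi_{\hat{D}}$ on $M$, unique up to isotopy.

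The main obstacle will be verifying well-definedness with respect to the auxiliary choices: (i) the representative $h$ within its conjugacy class, (ii) the invariant metric used to produce round disks, and (iii) the specific $\langle h\rangle$-invariant disks around each marked orbit. For (i), any homeomorphism conjugating $h$ to another representative $h'$ can be adjusted near the marked orbits to carry our chosen disks for $h$ to disks for $h'$, producing a conjugacy of the resulting rational open book monodromies. For (ii) and (iii), any two $\langle h\rangle$-invariant disks around a given orbit are related by an equivariant ambient isotopy supported in a neighborhood of that orbit, which upgrades to an isotopy of the corresponding rational open books. Since isotopic rational open books support isotopic contact structures by the rational Thurston--Winkelnkemper construction of~\cite{BEM}, the isotopy class of $\xi_{\hat{D}}$ depends only on $\hat{D}$, which is the required uniqueness.
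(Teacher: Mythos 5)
Your construction is correct, but it takes a genuinely different route from the paper. You realize the full data set directly as a periodic map $h$ on the closed surface via the data-set/conjugacy-class correspondence, cut out equivariant invariant disks around the marked orbits, feed the resulting bounded periodic map to the rational Thurston--Winkelnkemper construction of \cite{BEM}, and then check independence of the auxiliary choices (conjugacy representative, invariant metric, choice of invariant disks). The paper instead \emph{defines} the association through the structure theory of \cite{PRS}: it first treats marked irreducible Type~1 data sets, whose actions have a canonical realization as rotations of a hyperbolic polygon (Theorem~\ref{res:1}), and then handles a general marked data set by decomposing it into compatible or self-compatible Type~1 pieces and inductively gluing the corresponding pages and monodromies as in Examples~\ref{type 2 exmp 1} and \ref{type 2 exemp 2}. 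Your approach is shorter, avoids the decomposition machinery, and is more explicit about well-definedness, which the paper leaves largely implicit behind the canonical polygon realizations; the paper's route, on the other hand, outputs the monodromy in the factored form (Type~1 pieces, Dehn twists along gluing curves, boundary rotations) that is exactly what the Stein fillability argument of Theorem~\ref{stein filling 1} later consumes, so the decomposition is doing double duty as setup for the main theorem. One caveat that applies to both arguments: when several cone points carry the same pair $(c_i,n_i)$, the marking $[j_1,\dots,j_k]$ only specifies indices, so one must still argue that the cut-open pair $(\hat{\Sigma},\hat{h})$ is independent of which orbit with the given local data is chosen; your step (i) silently assumes the conjugacy can be chosen to match the labelings. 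This can be repaired, e.g., by observing that the conjugacy class of a periodic map on a compact surface with boundary is again determined by its quotient-orbifold and boundary-rotation data, which depend only on the marked data set, but it deserves a sentence.
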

\noindent The above proposition opens the door to the possibility of describing a class of contact structures combinatorially in terms of marked data sets. Moreover, the properties of contact structures can be studied through marked data sets. Guided by this philosophy, our main results are formulated in terms of conditions on marked data sets. 

In \cite{PRS}, Prasad-Rajeevsarathy-Sanki described a procedure for decomposing an arbitrary periodic map $h$ of order $n$ on $\Sigma$ (that is not realizable as a rotation of $\Sigma$) into irreducible (periodic) components $h_i$ (of order $n$) on surfaces $\Sigma_i$ that have that have at least one fixed point, called \textit{irreducible Type 1 maps}. It is known~\cite{G1} that an irreducible Type 1 map $f_0$ is encoded by a data set of the form $D_{f_0}=(n,0,(c_1,n_1),(c_2,n_2),(c_3,n)$. Further, it was shown that a map such as $f_0$ is realized as a rotation of a special hyperbolic polygon (with side-pairing) by $2\pi c_3^{-1}/n$. The decomposition of $h$ into irreducibles induces a decomposition of $D_h$ into simpler data sets $D_{h_i}$ of the form $D_{f_0}$. This, in turn, extends to a decomposition of $\hat{D}_h$ into marked irreducible Type 1 data sets $\hat{D}_{h_i}$. This process (of decomposing $h$ into irreducibles) can be completely reversed (to recover the original action $h$) by \textit{gluing} pairs of such irreducible components along \textit{compatible orbits}, where the induced local rotation angles are equal. More precisely, when the actions of a pair $(h_1,h_2)$ of irreducible Type 1 components induce \textit{compatible orbits of size k} (i.e have orbits where the local rotation angles add up to $0 $modulo $2 \pi$), we can remove (cyclically permuted) invariant disks around points in the orbits and then identify the resultant boundary components, thereby realizing a new action $(h_1,h_2)$. We call this process of constructing $(h_1,h_2)$ as a \textit{$k$-compatibility}. For example, the order $6$ action $f$ on the torus with data set $D_{f} = (6, 0; (1,2),(1,3),(1,6))$ is compatible with the order $6$-action $f^5$ on the torus (represented by $D_{f^5} = (6, 0; (1,2),(2,3),(5,6))$) along the fixed points at the center of the realizing hexagons, resulting in the compatible pair $(f,f^5)$ represented by $(D_f,D_{f^5}) = (6; 0, (1,2), (1,2), (1,3), (2,3))$. A similar compatibility of gluing can be also defined for marked data sets.  

In Section~\ref{sec:symp_fill}, we investigate the fillability properties of the contact structures associated to marked data sets. The following result describes a class of Stein fillable contact structures in terms of their marked data sets.   

\begin{theorem}\label{stein filling 1}
Let $\hat{D}$ be a marked data set representing an action of degree $n$ on $\Sigma$ that decomposes into a collection of marked irreducible Type 1 data sets 
$$\hat{D}_i = (n_+,0,(c_{i1},n_{i1}),(c_{i2},n_{i2}),(c_{i3},n),[j_{i1},\ldots,j_{ik}])$$
representing actions on surfaces $\Sigma_{g_i}$, for $1 \leq i \leq m$,  such that the following conditions hold: 
\begin{enumerate}[(i)]
\item If $s \in \{j_{i1},\ldots,j_{ik}\}$, then $n_s =n$.
\item If $(D_i,D_j)$ forms a compatible pair, then both $\partial (\Sigma_{g_i} \cup \Sigma_{g_j}) \cap \partial \Sigma_{g_i}$  and $\partial (\Sigma_{g_i} \cup \Sigma_{g_j}) \cap \partial \Sigma_{g_j}$ are non-empty.
\end{enumerate}	
Then the contact structure associated to $\hat{D}$ is Stein fillable. 			
\end{theorem}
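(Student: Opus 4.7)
The plan is to reduce the global assertion about $\hat{D}$ to two ingredients: first, that each marked irreducible Type 1 component $\hat{D}_i$ gives a rational open book whose associated contact structure is Stein fillable; and second, that each compatible gluing indicated by the \cite{PRS}-style decomposition preserves Stein fillability at the level of rational open books. Iterating the gluing step over the decomposition then produces a Stein filling of the contact structure associated to $\hat{D}$.

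For the first ingredient, an irreducible Type 1 action $h_i$ of degree $n$ is realized as a rotation of a hyperbolic polygon by $2\pi c_{i3}^{-1}/n$. Condition (i) forces every marked point of $\hat{D}_i$ to satisfy $n_s = n$, so it is a genuine fixed point of $h_i$ with local rotation angle $2\pi c_s^{-1}/n$. The page $\hat{\Sigma}_{g_i}$ is obtained by deleting $h_i$-invariant disks around these fixed points, and the induced monodromy $\hat{h}_i$ rotates each boundary circle by an angle whose denominator is exactly $n$. In particular $\hat{h}_i$ is periodic and has uniform positive rotation numbers on every boundary component for the appropriate choice of orientation. The Giroux--Loi--Piergallini-type positivity criterion for rational open books proved elsewhere in the paper then supplies a Stein filling of the associated contact structure, built explicitly via the Mori-type recipe advertised in the abstract.

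For the second ingredient, a compatibility between $(D_i,D_j)$ pairs up invariant neighborhoods of compatible orbits so that the local rotation angles sum to $0 \bmod 2\pi$, and then glues the two surfaces along the resulting boundary circles. Condition (ii) guarantees that after the gluing each of $\hat{\Sigma}_{g_i}$ and $\hat{\Sigma}_{g_j}$ still contributes at least one free boundary component, so the glued page is again a surface with non-empty boundary and the result is still a bona fide rational open book. The Mori-type filling construction extends across this gluing by attaching a round $1$-handle, equivalently a canonical pair of Stein $2$-handles whose Legendrian framings are dictated by the matching rotation numbers, to the disjoint union of the two earlier Stein fillings. This produces a Stein filling of the contact manifold supported by the rational open book of $\hat{D}_i \cup \hat{D}_j$, and iterating over the decomposition yields the desired Stein filling for $\hat{D}$.

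The principal obstacle is the second ingredient, namely verifying that the compatible gluing is realizable in the Stein/Weinstein category rather than merely smoothly. The matching of rotation numbers forced by compatibility must coincide exactly with the condition needed for a Stein handle attachment along an isotropic core; condition (ii) is precisely the combinatorial hypothesis preventing the handle attachment from capping off a free boundary component and destroying the rational open book structure on the upper boundary. Once this handle-attachment picture is extracted from the Mori-type construction, the remaining induction over the decomposition tree is routine.
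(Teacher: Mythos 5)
Your reduction to (a) fillability of the irreducible pieces and (b) persistence under compatible gluing matches the paper's broad strategy, but both ingredients, as you justify them, have genuine gaps. For (a), you argue that the boundary rotations of $\hat{h}_i$ are positive and then invoke the Giroux--Loi--Piergallini-type criterion (Theorem~\ref{Stein filling 2}); but that theorem requires the mapping-class factor $h$ to lie in $\mathrm{Dehn}^+(\Sigma,\partial\Sigma)$, i.e.\ to be a product of positive Dehn twists, and nothing in your argument shows that the relative class underlying a periodic monodromy with positive fractional Dehn twist coefficients has such a factorization. Positivity of the boundary rotation (FDTC) and positivity of the monodromy are different conditions, and conflating them is circular here. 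The paper's Lemma~\ref{irreducible filling} instead uses Colin--Honda (Theorem~\ref{colinho1}): positive FDTCs for a monodromy freely isotopic to a periodic map give Stein fillability of the honest open book $\OB(\Sigma,h)$, and then the boundary rotations are reinstated by viewing $\ROB(\Sigma,R_0)$ as admissible transverse surgery of slope $-n/c_i^{-1}<-1$ on the binding (Lemma~\ref{binding surgery}), which Baldwin--Etnyre's Theorem~\ref{BE} converts into Legendrian surgeries, i.e.\ Stein $2$-handle attachments. (Also, Mori's construction yields strong symplectic, not Stein, fillings, so it cannot substitute here.)

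For (b), you concede that the round-$1$-handle/Stein-handle gluing of the two fillings is the main unresolved obstacle, and indeed it is: Stein fillability is not known to persist under such binding-type gluings in general, and your reading of condition (ii) -- that it merely keeps the glued page from losing all its boundary -- does not capture what is actually needed. The paper uses condition (ii) to see the glued page $\tilde{\Sigma}=\Sigma_{g_1}\cup\Sigma_{g_2}$ as obtainable from \emph{either} $\Sigma_{g_1}$ or $\Sigma_{g_2}$ by attaching $1$-handles; hence each $h_i$ extends by the identity to $\tilde{h}_i$ on $\tilde{\Sigma}$ with $\OB(\tilde{\Sigma},\tilde{h}_i)$ still Stein fillable, and the BEM composition theorem (Theorem~\ref{stein cobord}) then gives Stein fillability of $\OB(\tilde{\Sigma},\tilde{h}_1\circ\tilde{h}_2)$. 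Your proposal never identifies the composed monodromy of the glued rational open book, which is $\tilde{h}_1\circ T_{C_0}\circ\tilde{h}_2\circ\bar{\partial}_1\circ\bar{\partial}_2$: the compatible gluing contributes a positive Dehn twist $T_{C_0}$ along the gluing curve (handled by one more Stein $2$-handle), and the leftover boundary rotations $\bar{\partial}_i$ are again absorbed by the transverse-surgery-to-Legendrian-surgery argument. Without the extension-plus-composition mechanism and without accounting for $T_{C_0}$ and the residual rotations, the inductive step does not go through as written.
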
	

\noindent For example, the marked data set $\hat{D}_f = (6,0;(1,2),(1,3),(1,3),(5,6),[4])$ that is realized as a compatible pair $(D_f,D_{f^5})$ of irreducible marked data sets satisfies the hypotheses of Theorem~\ref{stein filling 1}. Therefore, the contact structure associated to $\hat{D}_f$ is Stein fillable. 
\begin{remark}
Theorem \ref{stein filling 1} can be viewed as a generalization of a result due to Colin and Honda \cite[Theorem 4.2]{CH}. The result of Colin-Honda for honest open book says that a contact structure supported by an open book with a periodic monodromy (i.e. having a periodic map as its Thurston representative) is Stein fillable if the monodromy is right-veering. 
\end{remark}
Our next result pertains to producing explicit symplectic fillings of some rational open books. In \cite{Mo}, Mori constructed such symplectic fillings for the contact structures coming from an explicit construction of Thurston and Winkelnkemper. We give a generalization of this construction for rational open books. For the notations regarding rational open books and its monodromy we refer to section \ref{integral to rational}. A rational open book is characterized by a surface $\Sigma$ with boundary and a homeomorphism $\phi$ of $\Sigma$. We denote such a rational open book by $\ROB(\Sigma,h)$. Let $\Mod(\Sigma)$ denote the mapping class group of $\Sigma$. If $\phi$ does not permute boundaries, then $\phi$ can be written as a composition of some element $h \in \Mod(\Sigma)$ and fractional rotations of the boundary components~\cite[Section 5]{BEM}. Let $\mathrm{Dehn}^+(\Sigma,\partial \Sigma)$ consist of relative isotopy classes of diffeomorphisms which are products of positive Dehn twists. The following result is stated for surfaces with connected boundary, but the proof holds true for surfaces with multiple boundary components.  

\begin{theorem} \label{symp filling 2}
Consider a rational open book $\ROB(\Sigma,\phi)$ with $\partial \Sigma$ connected. Say, $\phi$ can be written as $h \circ \partial_{\frac{q}{p}}$, where $h \in \mathrm{Dehn}^+(\Sigma,\partial \Sigma)$ and $\partial_{\frac{q}{p}}$ is a $\frac{2\pi q}{p}$-rotation along $\partial \Sigma$. Then $\ROB(\Sigma,\phi)$ admits a strong symplectic filling for $p > q > 0$.
\end{theorem}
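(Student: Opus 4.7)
The plan is to adapt Mori's explicit symplectic filling construction from the honest open book setting to rational open books by treating the two factors of the monodromy $\phi = h \circ \partial_{q/p}$ separately: the positive Dehn twist factor $h$ via an allowable Lefschetz fibration, and the fractional rotation $\partial_{q/p}$ via a symplectic plumbing cap attached near the binding.

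For the Lefschetz piece, I would factor $h = \tau_{\gamma_1}\cdots \tau_{\gamma_k}$ into positive Dehn twists along curves $\gamma_i \subset \Sigma$ and construct the associated allowable Lefschetz fibration $\pi\colon W_h \to D^2$ with regular fiber $\Sigma$ and vanishing cycles $\gamma_1,\ldots,\gamma_k$. By the standard positive Lefschetz fibration construction (essentially Mori's original setup, together with the Loi--Piergallini and Akbulut--Ozbagci refinements), $W_h$ is a Stein domain and $\partial W_h$ is the honest open book of $(\Sigma, h)$. Crucially, the horizontal boundary $\partial\Sigma \times D^2$ carries an explicit Liouville model, which will serve as the gluing region for the next stage.

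For the plumbing cap, I would realize the monodromy modification $h \mapsto h \circ \partial_{q/p}$ as a topological surgery supported near the binding. Replacing $(\Sigma,h)$ by $\ROB(\Sigma, h\circ \partial_{q/p})$ amounts to removing the standard solid torus neighborhood of the binding and regluing so that the new meridian has slope $-q/p$ with respect to the page/binding framing, i.e., performing a $(-p/q)$--Dehn surgery on the binding. For $p>q>0$, the Hirzebruch--Jung continued fraction expansion $p/q = [b_1,\ldots,b_n]$ satisfies $b_i \geq 2$, so this surgery is realized at the 4--manifold level by attaching a linear chain of symplectic $2$--handles with framings $-b_i$ along Legendrian push-offs of the binding. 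Equivalently, one builds an explicit negative-definite linear plumbing $P_{q/p}$ with a standard Liouville structure and symplectically glues it to $W_h$ along a neighborhood of $\partial\Sigma \times D^2$, producing a candidate filling $W = W_h \cup P_{q/p}$.

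The main obstacle, and the technical heart of the proof, is verifying that $W$ is in fact a strong symplectic filling of the rational open book $\ROB(\Sigma,\phi)$, rather than merely some filling of the underlying 3-manifold. This breaks into two checks. First, the symplectic form must extend coherently across the gluing region: here one would compare the Liouville model on $W_h$ near $\partial\Sigma \times D^2$ with the Liouville model on $P_{q/p}$ in a Darboux chart around the binding, and the condition $p>q>0$ is precisely what guarantees $b_i \geq 2$ and hence the symplectic admissibility of every handle framing. Second, the induced contact structure on $\partial W$ must be identified with the one supported by $\ROB(\Sigma,\phi)$, for which I would appeal to Proposition~\ref{association thm} together with a direct local computation near the new binding demonstrating that the boundary monodromy has acquired exactly the rotation $\partial_{q/p}$. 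The extension to surfaces with multiple boundary components is then immediate, since the plumbing cap construction is local at each component of $\partial \Sigma$ and the same chain-of-handles recipe can be carried out independently at each boundary circle.
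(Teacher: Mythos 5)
Your strategy works, but it is not the route the paper takes for this theorem; it is essentially the paper's proof of Theorem \ref{Stein filling 2}. The paper proves Theorem \ref{symp filling 2} in the opposite order: it first constructs an explicit Mori-style strong filling of $\ROB(\Sigma,\partial_{\frac{q}{p}})$ (Lemma \ref{rational filling}) by pulling back the symplectic form over the gluing region and extending it over $S^1\times D^3$ as $\Omega = d\bigl(f(r,s)\,d\theta + g(r,s)\,d\phi\bigr)$, the hypothesis $p>q>0$ entering as the pointwise symplectic condition $p f_0'(r)+q g_0'(r)<0$ compatible with the contact condition; it then attaches Weinstein $2$-handles along the twist curves of $h$, distributed along the $S^1$-direction of the mapping torus, to obtain a filling of $\ROB(\Sigma,h\circ\partial_{\frac{q}{p}})$. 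You instead start from the Lefschetz/Stein filling of $\OB(\Sigma,h)$ and cap off the boundary rotation by handles realizing the $-\frac{p}{q}$ surgery on the binding, which is exactly how the paper proves the stronger Theorem \ref{Stein filling 2}; carried out correctly, your argument gives Stein, hence strong, fillability. Two points need tightening: the binding is a transverse, not Legendrian, knot, so ``Legendrian push-offs of the binding'' with framings $-b_i$ are not literally available, and the smooth page framing must be compared with contact framings of Legendrian approximations; the correct tool is Lemma \ref{binding surgery} (the rational open book is an admissible transverse $-\frac{p}{q}$ surgery on the binding) combined with Theorem \ref{BE} of Baldwin--Etnyre, where $p>q>0$ is used as $-\frac{p}{q}<-1$ rather than directly as $b_i\geq 2$. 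Also, the identification of the boundary contact structure rests on the uniqueness of the contact structure supported by a rational open book from \cite{BEM}, not on Proposition \ref{association thm}. What each approach buys: the paper's route produces an explicit symplectic form (the announced analogue of Mori's construction), while yours trades that explicitness for the stronger Stein conclusion at the cost of heavier machinery.
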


On similar lines, a result by Giroux \cite{Gi}  and Loi-Piergallini \cite{Loi_Pier} for Stein-fillability of open book can be generalized to the setting of rational open books as stated follows.

\begin{theorem}\label{Stein filling 2}
Let $\ROB(\Sigma,\phi)$ be a rational open book  with $\Sigma$ connected. Suppose $\phi$ can be written as $h \circ \partial_{\frac{q_i}{p_i}}$, where $h \in \mathrm{Dehn}^+(\Sigma,\partial \Sigma)$ and $\partial_{\frac{q_i}{p_i}}$ is a $\frac{2\pi q_i}{p_i}$-rotation along $i$-th boundary component in $\partial \Sigma$. Then $\ROB(\Sigma,\phi)$ admits a Stein filling if for all $i$, $p_i > q_i> 0$.
\end{theorem}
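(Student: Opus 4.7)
The plan is to combine Theorem~\ref{symp filling 2} with the handle-attachment strategy behind the Giroux/Loi-Piergallini theorem. The argument will split into a model case for pure fractional boundary rotations and an inductive handle-attaching argument for the positive Dehn twist factor $h$.

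First, I would establish the case $h = \mathrm{id}$, where $\phi = \partial_{q_1/p_1} \circ \cdots \circ \partial_{q_n/p_n}$ is a pure fractional rotation along each boundary component. Revisiting the construction underlying Theorem~\ref{symp filling 2}, the strong symplectic filling produced there is assembled from a $\Sigma \times D^2$ piece capped off by explicit symplectic neighborhoods of the binding components, each modelled on a fractional twist region. Under the hypothesis $p_i > q_i > 0$, I would check that these local models can be chosen with global Liouville vector fields pointing outward along the boundary, thereby promoting the filling to a Weinstein (hence Stein) filling $W_0$ of $\ROB(\Sigma, \partial_{q_i/p_i})$.

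Next, factor $h = \tau_{c_1} \circ \cdots \circ \tau_{c_m}$ into positive Dehn twists along simple closed curves $c_j \subset \Sigma$. After positively stabilizing the open book if needed, push each $c_j$ onto a distinct page of the supported rational open book on $\partial W_0$ and realize it as a Legendrian curve whose contact framing is one less than the page framing; this is exactly the Weinstein framing. Attaching the corresponding Weinstein $2$-handles one by one to $W_0$ and invoking Eliashberg's criterion, the resulting manifold $W$ carries a Stein structure. The effect on the boundary open book is to compose the monodromy with $\tau_{c_j}$ at each stage, so the induced rational open book on $\partial W$ has monodromy $\tau_{c_m} \circ \cdots \circ \tau_{c_1} \circ \partial_{q_i/p_i} = h \circ \partial_{q_i/p_i} = \phi$. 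Uniqueness of the contact structure supported by $\ROB(\Sigma,\phi)$ (extending the Giroux correspondence from \cite{BEM}) then identifies $W$ as the desired Stein filling.

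The main obstacle is the first step: upgrading the strong symplectic filling of Theorem~\ref{symp filling 2} to a Weinstein filling. Concretely, one needs to exhibit a primitive of the symplectic form (equivalently, a Liouville vector field) on each fractional twist neighborhood which extends compatibly across the gluing into the $\Sigma \times D^2$ piece. The hypothesis $p_i > q_i > 0$ is precisely what makes the boundary rotation positive in the relevant sense and allows such a primitive to be written down explicitly; carrying this out rigorously will require a careful computation with the symplectic form from the Mori-type construction, and is the technical heart of the proof.
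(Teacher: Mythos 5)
Your second step (attaching Weinstein $2$-handles along the curves $c_j$ pushed onto pages to realize the positive Dehn twist factor) is fine and is essentially the same handle-attachment mechanism the paper uses elsewhere. The problem is your first step, and you have flagged it yourself: you never actually upgrade the strong symplectic filling of $\ROB(\Sigma,\partial_{\frac{q_i}{p_i}})$ coming from the Mori-type construction of Theorem~\ref{symp filling 2} to a Weinstein (hence Stein) filling. That construction only produces a $2$-form $\Omega=d(f\,d\theta+g\,d\phi)$ with $\Omega\wedge\Omega>0$ and contact-type boundary; exhibiting a globally defined Liouville vector field transverse to the boundary, compatible with the gluing to the $\Sigma\times D^2$ piece, and then invoking an existence result to get a Stein structure is genuinely nontrivial and is not a routine verification one can defer. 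Since the entire argument rests on this base case, the proposal as written has a real gap: "strong symplectic filling" does not imply "Stein filling," and nothing in Theorem~\ref{symp filling 2} or its proof gives you the Weinstein structure you need.

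The paper closes this gap by running the argument in the opposite order, which avoids the Mori construction entirely. Start from the honest open book $\OB(\Sigma,h)$: since $h\in\mathrm{Dehn}^+(\Sigma,\partial\Sigma)$, Giroux/Loi--Piergallini gives a Stein filling $X$ of its supported contact structure. By Lemma~\ref{binding surgery}, passing to $\ROB(\Sigma,h\circ\partial_{\frac{q_i}{p_i}})$ is an admissible transverse $-\frac{p_i}{q_i}$ surgery on the corresponding binding component, and the hypothesis $p_i>q_i>0$ gives $-\frac{p_i}{q_i}<-1$, so Theorem~\ref{BE} (Baldwin--Etnyre) converts each such surgery into Legendrian surgeries on a link in a standard neighborhood of the binding. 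Attaching the corresponding Stein $2$-handles to $X$ yields the Stein filling. If you want to salvage your route, you would either have to carry out the Liouville/Weinstein upgrade of the explicit filling in detail, or simply treat the $h=\mathrm{id}$ case by this same surgery argument applied to $\OB(\Sigma,\mathrm{id})$ --- at which point you have reproduced the paper's proof and your handle-attachment step for $h$ becomes unnecessary, since the rotations are handled after $h$, not before.
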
 

With the help of above results, one can produce many examples of fillable contact structures explicitly. The above mentioned results begin the exploration of similar results concerning fillability for the case of pseudo-periodic surface homeomorphisms,  which the authors are currently pursuing.

\bigskip
\noindent {\bf Acknowledgements:} The work in this article is supported by the grant EMR/2017/000727 by SERB, Government of India. The first author would also like to thank James Conway for noting an error in the earlier draft of the article and helpful conversations.
 
\section{Preliminaries}	
\label{sec:prelims}
	
\subsection{Periodic maps on surfaces} 
\label{subsec:periodic map}
Let $\Sigma$ be an oriented compact surface, and let $\Mod(\Sigma)$ denote the mapping class group of $\Sigma$. We recall the Neilsen-Thurston classification~\cite{Th} of surface diffeomorphisms.
\begin{theorem}
 An $f \in \Mod(\Sigma)$ is represented by a homeomorphism $h$ such that at least one of the following holds.
\begin{enumerate}
\item $h$ is a periodic.
\item $h$ is a reducible (i.e. $h$ preserves a multicurve in $\Sigma$).
\item $h$ is a pseudo-Anosov 
\end{enumerate}
\end{theorem}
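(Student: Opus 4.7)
The plan is to follow Thurston's original strategy via the action of the mapping class group on Teichmüller space and its Thurston compactification. First I would set up the Teichmüller space $\mathcal{T}(\Sigma)$ of marked hyperbolic structures on $\Sigma$, which is homeomorphic to an open ball of dimension $6g-6+2n$, and note that $\Mod(\Sigma)$ acts on $\mathcal{T}(\Sigma)$ by pullback of marked structures. The key geometric input is the Thurston compactification $\overline{\mathcal{T}(\Sigma)} = \mathcal{T}(\Sigma) \cup \mathcal{PMF}(\Sigma)$, where $\mathcal{PMF}(\Sigma)$ denotes the space of projective measured foliations; this compactification is homeomorphic to a closed ball, and the mapping class group action extends continuously to the boundary sphere $\mathcal{PMF}(\Sigma)$. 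I would cite these foundational facts (Fathi--Laudenbach--Poénaru) rather than reconstruct them.

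Given a mapping class $f \in \Mod(\Sigma)$ with representative $h$, I would apply the Brouwer fixed point theorem to the induced self-homeomorphism of $\overline{\mathcal{T}(\Sigma)}$ to obtain a fixed point $x \in \overline{\mathcal{T}(\Sigma)}$, and then split into cases according to where $x$ lies. Case 1: $x \in \mathcal{T}(\Sigma)$. Then $f$ fixes a marked hyperbolic structure, so it is represented by an isometry of that hyperbolic surface; since the isometry group of a closed hyperbolic surface is finite, a suitable representative $h$ is periodic. Case 2: $x \in \mathcal{PMF}(\Sigma)$ and is represented by a measured foliation $\mathcal{F}$ that is not filling (arational). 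Then some component of a finite union of compact leaves or of the complement of minimal components gives an $h$-invariant (up to isotopy) multicurve, and after replacing $h$ within its isotopy class we obtain a reducible representative.

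Case 3: $x \in \mathcal{PMF}(\Sigma)$ is a filling (arational) projective class $[\mathcal{F}]$, so $h_*[\mathcal{F}] = [\mathcal{F}]$ with some stretch factor $\lambda$. Here I would use a second fixed-point argument on $\mathcal{PMF}(\Sigma)$ (or a variational argument using the translation length of $f$ on $\mathcal{T}(\Sigma)$ in the Teichmüller metric) to produce a second invariant filling projective class $[\mathcal{G}]$ with stretch factor $\lambda^{-1}$, transverse to $[\mathcal{F}]$. The pair $(\mathcal{F}, \mathcal{G})$ gives a pair of transverse invariant measured foliations, and by taking a representative adapted to this pair (straightening leaves in Teichmüller charts) one produces a pseudo-Anosov representative of $f$. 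The hardest step is undoubtedly Case 3: constructing the transverse invariant foliation and then promoting the abstract dynamical data into an honest pseudo-Anosov homeomorphism requires the full machinery of the Teichmüller metric, the realization of stretch factors, and the rigidity of arational foliations; conceptually the other two cases amount to identifying $f$-invariant geometric structure (a point in $\mathcal{T}(\Sigma)$ or an invariant multicurve) directly from the fixed point.
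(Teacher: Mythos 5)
The paper does not prove this statement: it is quoted as background (the Nielsen--Thurston classification) with a citation to Thurston, so there is no in-paper argument to compare against. Judged on its own terms, your outline follows the standard route of Thurston and Fathi--Laudenbach--Po\'enaru: act on the Thurston compactification $\overline{\mathcal{T}(\Sigma)}$, apply Brouwer, and sort by the location of the fixed point. Cases 1 and 2 are essentially right (modulo the routine verification in Case 2 that the canonical reduction extracted from a non-arational invariant foliation is a nonempty $h$-invariant multicurve, which needs the naturality of the decomposition into minimal and annular components).

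The genuine gap is in your Case 3. An arational fixed class $[\mathcal{F}]$ only gives $h_*\mathcal{F} = \lambda\mathcal{F}$ for some $\lambda > 0$, and your construction of the transverse foliation with factor $\lambda^{-1}$ and of a pseudo-Anosov representative tacitly assumes $\lambda \neq 1$; a pseudo-Anosov map has stretch factor strictly greater than $1$. The subcase ``$\mathcal{F}$ arational and $\lambda = 1$'' does not lead to a pseudo-Anosov and must be shown separately to force $f$ to be periodic (or reducible) --- in FLP this is a distinct lemma about stabilizers of arational measured foliations, and omitting it leaves the trichotomy incomplete. Beyond that, your Case 3 defers the two hardest steps (producing the second invariant foliation, e.g.\ via the translation-length/axis argument on $\mathcal{T}(\Sigma)$ or by applying Brouwer to the closure of the $\lambda^{-1}$-eigendirections, and upgrading the pair of transverse invariant foliations to an honest pseudo-Anosov homeomorphism via Markov partitions or train tracks) to citations you have not actually supplied; as written these are assertions rather than proof. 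Since the paper itself treats the theorem as a black box, this is acceptable as a roadmap, but it is not yet a proof.
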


\noindent Suppose that $\Sigma_g$ is a closed oriented surface with genus $g \geq 1$. By the Nielsen realization theorem~\cite{Ke}, an $f \in \Mod(\Sigma)$ of order $n$ is represented by a homeomorphism $h$ of the same order.  Note that we will refer to both $h$ and $\langle h \rangle$, interchangeably, as a \textit{$\Z_n$-action on $\Sigma_g$}. Suppose that the quotient orbifold $\O_h := \Sigma_g/\langle h \rangle$ has $\ell$ cone points $x_i$, $1 \leq i \leq \ell$. Then each $x_i$ lifts to an orbit of size $n/n_i$ on $\Sigma$ (of the $\langle h \rangle$-action), and the local rotation induced by $h$ around the points in each orbit is given by $2 \pi c_i^{-1}/n_i$, where $c_i c_i^{-1} \equiv 1 \pmod{n_i}$. We will now formalize the notion of data set introduced in Section~\ref{sec:intro}.  

\begin{definition}
\label{defn:data_set}
	A \textit{data set of degree $n$} is a tuple
	$$
	D = (n,g_0, r; (c_1,n_1), (c_2,n_2),\ldots, (c_{\ell},n_{\ell})),
	$$
	where $n\geq 1$, $ g_0 \geq 0$, and $0 \leq r \leq n-1$ are integers, and each $c_i$ is a residue class modulo $n_i$ such that:
	\begin{enumerate}[(i)]
		\item $r = 0$ if, and only if $\ell = 0$, and when $r >0$, we have $\gcd(r,n) = 1$, 
		\item each $n_i\mid n$,
		\item for each $i$, $\gcd(c_i,n_i) = 1$, 
		\item for each $i$, $\lcm(n_1,\ldots \widehat{n_i}, \ldots,n_{\ell}) = \lcm(n_1,\ldots,n_{\ell})$, and $\lcm(n_1,\ldots,n_{\ell}) = n$, if $g_0 = 0$,  and
		\item $\displaystyle \sum_{j=1}^{\ell} \frac{n}{n_j}c_j \equiv 0\pmod{n}$.
	\end{enumerate}
	The number $g$ determined by the equation
	\begin{equation*}\label{eqn:riemann_hurwitz}
	\frac{2-2g}{n} = 2-2g_0 + \sum_{j=1}^{\ell} \left(\frac{1}{n_j} - 1 \right) \tag{R-H}
	\end{equation*}
	is called the \emph{genus} of the data set.
\end{definition}

\noindent The following proposition (see \cite[Theorem 3.9]{RV}) allows us to represent the conjugacy of a cyclic action by a data set. 

\begin{prop}\label{prop:ds-action}
Data sets of degree $n$ and genus $g$ correspond to conjugacy classes of $C_n$-actions on $\Sigma_g$. 
\end{prop}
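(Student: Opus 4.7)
The plan is to construct mutually inverse maps between data sets of degree $n$ and genus $g$, on the one hand, and conjugacy classes of $C_n$-actions on $\Sigma_g$, on the other. For the forward direction, starting with a periodic homeomorphism $h$ of order $n$ on $\Sigma_g$, I take the quotient orbifold $\O_h = \Sigma_g/\langle h \rangle$, whose underlying surface has some genus $g_0$ and (when the action is not free) cone points $x_1,\ldots,x_{\ell}$ of orders $n_i$. At each $x_i$ I read off the local rotation $2\pi c_i^{-1}/n_i$ of the lifted action to extract the pair $(c_i,n_i)$; in the free case I instead record the rotation parameter $r$. Conjugation of $h$ preserves the quotient orbifold together with all orbit sizes and local rotation data, so this assignment descends to conjugacy classes. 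Conditions (ii), (iii) are built into the construction, the Riemann--Hurwitz relation gives the stated genus, and (iv), (v) arise by translating the action into its classifying surjection $\rho: \pi_1^{\mathrm{orb}}(\O_h) \twoheadrightarrow C_n$: surjectivity together with connectedness of the cover force the lcm conditions, while the orbifold surface relation $\prod_i \rho(x_i) = 1$ in $C_n$, read additively, is precisely the congruence (v).

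For the inverse direction, given an abstract data set $D$, I construct an orbifold $\O$ of genus $g_0$ with cone points of the prescribed orders and define $\rho:\pi_1^{\mathrm{orb}}(\O) \to C_n$ by sending the $i$-th cone-point generator to the element of $C_n$ dictated by $(c_i,n_i)$, sending the handle generators to zero when $\ell > 0$, and using $r$ on the free loop when $\ell = 0$. Axioms (iv) and (v) are precisely what is required for $\rho$ to be well-defined and surjective; the associated regular branched $n$-fold cover is then connected and, by Riemann--Hurwitz, homeomorphic to $\Sigma_g$. The deck transformation action furnishes the desired $C_n$-action whose data set is $D$.

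The main obstacle is the injectivity statement: showing that two periodic homeomorphisms $h,h'$ realizing the same data set are in fact conjugate in $\Homeo(\Sigma_g)$. This is the deepest input and rests on Nielsen's classical classification of periodic surface homeomorphisms, which asserts that the conjugacy class of such an $h$ is determined by the topological type of $\O_h$ together with the unordered collection of local invariants $\{(c_i,n_i)\}$ (and the parameter $r$ in the free case)---exactly the data encoded by a data set. Granted this, the two assignments above are mutually inverse, establishing the claimed correspondence.
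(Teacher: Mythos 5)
The paper does not prove this statement itself; it is quoted from the literature (the citation is to \cite[Theorem 3.9]{RV}), and your outline --- quotient orbifold, classifying epimorphism $\rho:\pi_1^{\mathrm{orb}}(\mathcal{O})\to C_n$, Riemann--Hurwitz for the genus, and Nielsen's rigidity theorem for injectivity --- is exactly the standard argument underlying that result, with the right key input correctly identified (the hard part is indeed that the fixed-point/rotation data determines the conjugacy class, which is Nielsen, not something you can get cheaply).

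There is, however, one concrete flaw in your inverse construction. You define $\rho$ by sending each cone-point generator to the element determined by $(c_i,n_i)$ and \emph{all handle generators to zero when $\ell>0$}. Then the image of $\rho$ is the cyclic subgroup of order $\lcm(n_1,\ldots,n_\ell)$, and the data set axioms only force this lcm to equal $n$ when $g_0=0$. So for a data set such as $(4,1;(1,2),(1,2))$ (which satisfies all of (i)--(v)) your $\rho$ has image $\{0,2\}\subset\Z_4$, is not surjective, and the associated branched cover is disconnected --- no $C_n$-action is produced. The fix is easy: since the handle generators are unconstrained by the abelianized surface relation, send one of them to a generator of $C_n$ (this does not create branch points, so the resulting action still realizes $D$); but as written the step fails. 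A smaller inaccuracy: the ``drop one index'' half of condition (iv) is not forced by surjectivity/connectedness, but by the long relation itself --- $\rho(x_i)=-\sum_{j\neq i}\rho(x_j)$ lies in the subgroup generated by the other cone images, so $n_i$ divides $\lcm_{j\neq i}n_j$; only the clause $\lcm(n_1,\ldots,n_\ell)=n$ for $g_0=0$ comes from surjectivity. With these repairs your argument is the standard proof of the cited theorem.
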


\noindent We will denote the data set associated with a periodic map $h$ by $D_h$. For a $D_h$ as in Definition~\ref{defn:data_set}, the integer $r$ will be non-zero only when $h$ is a free rotation of $\Sigma_g$ by $2\pi r/n$, in which case $D_h$ would take the form $(n,g_0,r;-)$. Equation~\ref{eqn:riemann_hurwitz} in Definition~\ref{defn:data_set} is the Riemann-Hurwitz equation associated with the branched covering $\Sigma_g \to \mathcal{O}_h$. Before diving into the geometric realizations of  cyclic actions, we recall from \cite{PRS} the classification of $C_n$-actions on $\Sigma_g$ into three broad categories. 

\begin{definition}\label{def:types_of_actions}
Let $h$ be a $C_n$-action on $\Sigma_g$ with $D_h$ as in Definition~\ref{defn:data_set}. Then $h$ is said to be a: 
\begin{enumerate}[(i)]
\item \textit{rotational action}, if either $r \neq 0$, or $D$ is of the form 
$$(n,g_0;\underbrace{(s,n),(n-s,n),\ldots,(s,n),(n-s,n)}_{k \,pairs}),$$ 
for integers $k \geq 1$ and $0<s\leq n-1$ with $\gcd(s,n)= 1$, and $k=1$ if, and only if $n>2$.  
\item \textit{Type 1 action}, if $\ell = 3$ and $n_3 = n$. 
\item \textit{Type 2 action}, if $h$ is neither a rotational nor a Type 1 action. 
\end{enumerate}
\end{definition}  

\noindent It is apparent that rotational actions can be realized as the rotations of $\Sigma_g$ through an axis under a suitable isometric embedding $\Sigma_g \to \mathbb{R}^3$. Moreover, Gilman~\cite{G1} showed that a Type 1 $C_n$-action $h$ on $\Sigma_g$ is \emph{irreducible} if and only if $\mathcal{O}_h$ is a sphere with three cone points, which in the language of data sets means that 
$$D_h = (n,0;(c_1,n_1),(c_2,n_2),(c_3,n)).$$

\noindent In~\cite{PRS}, it was shown that every irreducible Type 1 action is as a rotation of a unique polygon with side-pairing.   

\begin{theorem}\label{res:1}
For $g \geq 2$, consider a irreducible Type 1 action $f$ on $\Sigma_g$ with $$D_f = (n,0; (c_1,n_1),\linebreak (c_2,n_2), (c_3,n)).$$ Then $f$ can be realized explicitly as the rotation $\theta_f$ of a unique hyperbolic polygon $\P_f$ with a suitable side-pairing $W(\P_f)$, where $\P_f$ is a hyperbolic  $k(f)$-gon with
$$ k(f) := \begin{cases}
2n, & \text { if } n_1,n_2 \neq 2, \text{ and } \\
n, & \text{otherwise, }
\end{cases}$$
and for $0 \leq m\leq n-1$, 
$$ 
W(\P_f) =
\begin{cases}
\displaystyle  
  \prod_{i=1}^{n} a_{2i-1} a_{2i} \text{ with } a_{2m+1}^{-1}\sim a_{2z}, & \text{if } k(h) = 2n, \text{ and } \\
\displaystyle
 \prod_{i=1}^{n} a_{i} \text{ with } a_{m+1}^{-1}\sim a_{z}, & \text{otherwise,}
\end{cases}$$
where $\displaystyle z \equiv m+qj \pmod{n}$ and $q= (n/n_2)c_3^{-1},j=n_{2}-c_{2}$. 
\end{theorem}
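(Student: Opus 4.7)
The plan is to realize $f$ explicitly as a rotation of a suitably chosen hyperbolic polygon with side pairing, then verify that the quotient orbifold recovers $D_f$, and finally appeal to Proposition~\ref{prop:ds-action} for uniqueness. Since $n_3 = n$, the action $\langle f \rangle$ has a global fixed point $p \in \Sigma_g$ with local rotation angle $2\pi c_3^{-1}/n$. I would place $p$ at the center of a regular hyperbolic polygon $\mathcal{P}_f \subset \mathbb{H}^2$ and define $\theta_f$ to be rotation about this center by $2\pi c_3^{-1}/n$; for $\theta_f$ to preserve $\mathcal{P}_f$, the side count $k(f)$ must be a multiple of $n$, and the interior angle at each vertex is pinned down by requiring the quotient to have area matching the Riemann-Hurwitz formula.

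Next I would determine whether $k(f) = n$ or $k(f) = 2n$. If the side pairing has the form $a_{m+1}^{-1} \sim a_z$ on an $n$-gon, then consecutive sides get identified, which forces the corresponding vertex cycle to have a $\mathbb{Z}/2$-stabilizer, i.e.\ it produces a cone point of order dividing $2$. Thus the $n$-gon suffices precisely when one of $n_1, n_2$ equals $2$. Otherwise, both non-central cone points have order $>2$, and the vertex orbits must split into two distinct classes; this requires pairing odd-indexed sides with even-indexed sides, forcing $k(f) = 2n$.

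With $k(f)$ fixed, the $\theta_f$-equivariance of $W(\mathcal{P}_f)$ means the pairing is determined by specifying a single offset: all pairings $a_{2m+1}^{-1} \sim a_{2z}$ (or $a_{m+1}^{-1} \sim a_z$) are obtained from one by rotating. The content of the theorem is the precise value of this offset, namely $z \equiv m + qj \pmod n$ with $q = (n/n_2)c_3^{-1}$ and $j = n_2 - c_2$. To justify this, I would compute the two vertex cycles of the identified polygon combinatorially: starting at a vertex and alternating between the rotation step (shifting indices by $1$ along the polygon) and the pairing step (shifting by the offset $qj$), and check that one orbit has length $n/n_2$ with induced local rotation $2\pi c_2^{-1}/n_2$. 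The choice $q \equiv (n/n_2) c_3^{-1} \pmod n$ is exactly what converts the ambient $2\pi c_3^{-1}/n$-rotation into the desired $2\pi c_2^{-1}/n_2$-rotation on that orbit, while the shift by $j = n_2 - c_2$ encodes the inverse congruence. The second orbit then automatically has the correct parameters $(c_1,n_1)$ by the constraint $\sum_{j}(n/n_j)c_j \equiv 0 \pmod n$ from Definition~\ref{defn:data_set}(v), which guarantees closure.

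The main obstacle will be Step 3: the bookkeeping of the vertex cycles under the combined rotation/pairing action and the verification that the stated offset produces orbits of sizes $n/n_1$ and $n/n_2$ with the correct rotation data, rather than some proper quotient. Once these cycle lengths and rotation angles are matched, Riemann-Hurwitz confirms that the resulting quotient is a sphere with three cone points of orders $n_1,n_2,n$, so the constructed action has data set $D_f$. Uniqueness of $\mathcal{P}_f$ (up to hyperbolic isometry) and of $\theta_f$ up to conjugacy then follows from Proposition~\ref{prop:ds-action}, since $D_f$ determines the $C_n$-action on $\Sigma_g$ up to conjugacy.
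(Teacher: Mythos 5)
The paper does not actually prove Theorem~\ref{res:1}: it is imported verbatim from \cite{PRS} (``In \cite{PRS}, it was shown that every irreducible Type 1 action is as a rotation of a unique polygon with side-pairing''), so there is no in-paper argument to measure you against. Judged on its own terms, your outline follows the natural constructive route one would expect (build the regular hyperbolic $k(f)$-gon around the lift of the order-$n$ cone point, rotate by $2\pi c_3^{-1}/n$, check the quotient data, invoke Proposition~\ref{prop:ds-action}), but as written it has a genuine gap: the entire content of the theorem is the verification that the side-pairing with offset $z \equiv m+qj \pmod{n}$, $q=(n/n_2)c_3^{-1}$, $j=n_2-c_2$, produces vertex orbits of sizes $n/n_1$ and $n/n_2$ with induced local rotations $2\pi c_1^{-1}/n_1$ and $2\pi c_2^{-1}/n_2$, together with the facts that the rotation is compatible with the pairing (so it descends to the identification space) and that each vertex cycle has total angle $2\pi$ (so the identification space is a closed hyperbolic surface and the polygon is the unique regular one with that vertex angle). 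You explicitly defer this bookkeeping as ``the main obstacle,'' so the proposal is a plan rather than a proof; the appeal to condition (v) of Definition~\ref{defn:data_set} can only pin down the third cone point after the construction has been shown to yield a bona fide $C_n$-action with exactly three exceptional orbits, which again requires the deferred computation.

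A second, more localized error is your justification of the dichotomy $k(f)=n$ versus $k(f)=2n$. In the $n$-gon case the order-$2$ cone point does not come from a ``vertex cycle with $\mathbb{Z}/2$-stabilizer'' created by identifying consecutive sides; the pairing $a_{m+1}^{-1}\sim a_z$ need not identify consecutive sides at all. Rather, the order-$2$ point arises at the midpoints of the sides: since $a_{m+1}$ is glued to $a_z$ with reversed orientation and a suitable power of the rotation carries $a_z$ back to $a_{m+1}$, that power fixes the image of the midpoint and acts there by a $\pi$-rotation (as in the standard octagon model of $\Sigma_2$, where the four midpoint classes form the orbit with $n_i=2$, while all vertices are identified to a single fixed point). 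Your vertex-cycle accounting therefore needs to include side-midpoint orbits in the $k(f)=n$ case, and the claimed reason for the dichotomy should be replaced by this angle/midpoint analysis; the conclusion you state is correct, but the argument offered for it would not survive scrutiny.
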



\noindent Further, it was shown that an arbitrary non-rotational cyclic action $h$ admits a decomposition into irreducible Type 1 components. Conversely, given such a decomposition $h$ can be recovered by piecing together the irreducible components using the following two types of constructions.
\begin{enumerate}[(a)]
\item \textit{$k$-compatibility.} This construction involves the removal of cyclically permuted invariant disks around points in a pair of \textit{compatible orbits}  (where the induced local rotation angles are the same) by a pair $h_i$ of irreducible Type 1 action on $\Sigma_{g_i}$ then identifying the resulting boundary components thereby obtaining an action $h=(h_1,h_2)$ on $\Sigma_{g_1+g_2+k-1}$. This process of constructing $h$ is called a \emph{k-compatibility}. If this construction is performed with a pair of compatible orbits induced by a single action $h$ on $\Sigma_g$, resulting in an action on $\Sigma_{g+k}$, then the process is called a \textit{self $k$-compatibility. }
\item \textit{Permutation additions and deletions.} In the process of \textit{permutation addition} we remove (cyclically permuted) invariant disks around points in an orbit of an action size $n$ induced by the action $h$ and then paste $n$ copies of $\Sigma_{g'}^1$ (i.e. $\Sigma_{g'}$ with one boundary component) to the resultant boundary components. This results in an action on $\Sigma_{g+ng'}$ with the same fixed point and orbit data as $h$. The reverse of this process, which involves the removal of such an added permutation component, is called a \textit{permutation deletion. }
 \end{enumerate}
 
 \noindent The upshot of the preceding discussion is the following. 
 
 \begin{theorem}\label{res:2}
For $g \geq 2$, an arbitrary non-rotational action on $\Sigma_g$ can be constructed through finitely many $k$-compatibilities, permutation additions, and permutation deletions on irreducible Type 1 actions. 
\end{theorem}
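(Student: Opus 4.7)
The plan is to argue by induction on the genus $g$ of $\Sigma_g$, using the quotient orbifold $\mathcal{O}_h$ of genus $g_0$ with $\ell$ cone points as the book-keeping device. The base case is when $\mathcal{O}_h$ is a sphere with exactly three cone points with $n_\ell = n$: this is precisely an irreducible Type~1 action, so nothing needs to be done. For the inductive step, given a non-rotational $h$ that is not already irreducible Type~1, I would exhibit an explicit cut on $\mathcal{O}_h$ that reverses one of the three named operations and produces a strictly simpler action to which induction applies.

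If $g_0 \geq 1$, I would pick a non-separating essential simple closed curve $\gamma$ on the base surface of $\mathcal{O}_h$ disjoint from all cone points. Its preimage in $\Sigma_g$ is an $\langle h\rangle$-invariant multicurve whose components are cyclically permuted; say there are $k$ of them. Cutting $\Sigma_g$ along this multicurve and capping off the resulting boundary by $h$-equivariant disks yields a new action $h'$ on $\Sigma_{g-k}$ whose quotient has genus $g_0-1$ and $\ell+2$ cone points, where the two new cone points encode the rotation data at the cut. By construction $h$ is recovered from $h'$ by a self $k$-compatibility along the pair of new orbits, and $h'$ lives on a lower-genus surface, so induction handles it. If instead $g_0=0$ and $\ell \geq 4$, I would take a separating simple closed curve on the sphere base that divides the cone points into two nontrivial groups; lifting and capping as before splits $h$ into two actions $h_1, h_2$ of order $n$ on lower-genus surfaces, and $h=(h_1,h_2)$ via a $k$-compatibility. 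Any piece of the resulting decomposition whose quotient has no cone points is a free cyclic action, i.e., a permutation component, and is recorded as a permutation addition applied to the remaining irreducible Type~1 core (and conversely is stripped off via a permutation deletion). The residual possibility $g_0 = 0, \ell = 3$ with $n_\ell < n$ is ruled out by a short check: conditions (iv) and (v) of Definition~\ref{defn:data_set} together force the action to be of the rotational form listed in Definition~\ref{def:types_of_actions}(i), contradicting non-rotationality.

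The main obstacle I anticipate lies in the combinatorial bookkeeping of the $g_0=0$, $\ell \geq 4$ step. One must choose the partition of cone points so that \emph{both} resulting tuples form valid data sets in the sense of Definition~\ref{defn:data_set}; the binding condition is (v). The global congruence $\sum_j (n/n_j) c_j \equiv 0 \pmod{n}$ guarantees that the two partial sums arising from any bipartition are negatives of each other modulo $n$, but one then has to realise this common residue as the contribution of a single compatible pair of new cone points $(c', n/k)$ and $(n/k - c', n/k)$ with $\gcd(c', n/k)=1$. This forces a specific choice of the size $k$ of the cut orbit (equivalently, of the stabiliser of the cutting curve) and a specific partition of the original cone points, and verifying that such a choice is always available is the technical heart of the argument. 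Once that combinatorial step is settled, the induction on $g$ closes: each reduction strictly decreases $g$ (for $k$-compatibilities, each piece has genus $g_i < g$, and for self $k$-compatibilities the genus drops by $k \geq 1$), so the process terminates with a finite collection of irreducible Type~1 pieces assembled by $k$-compatibilities and permutation moves.
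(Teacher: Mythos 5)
Your proposal does not match what the paper actually does: Theorem~\ref{res:2} is quoted from \cite{PRS} and no proof is given in this article, so the only question is whether your sketch stands on its own. It does not, for two concrete reasons. First, your claimed ``short check'' that the residual case $g_0=0$, $\ell=3$ with all $n_i<n$ is impossible for a non-rotational action is false. Conditions (iv) and (v) of Definition~\ref{defn:data_set} do not force $n_3=n$: the tuple $(30,0;(1,6),(1,10),(11,15))$ satisfies every condition of the definition ($\lcm$ of any two of $6,10,15$ is $30=n$, and $5\cdot 1+3\cdot 1+2\cdot 11=30\equiv 0 \pmod{30}$), so by Proposition~\ref{prop:ds-action} it is realized by a genuine $C_{30}$-action on $\Sigma_{11}$, which is Type~2 and has a sphere quotient with three cone points, none of order $n$. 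Your induction has no move for such an action: any separating curve on the $3$-cone-point sphere isolates a single cone point, and the resulting piece is a $2$-cone-point spherical quotient (a rotational piece, forced to have genus $0$ by (R-H)), not an irreducible Type~1 action. Handling exactly this kind of Type~2 action is a substantial part of the decomposition procedure in \cite{PRS}, and it is missing here.

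Second, you yourself flag the ``technical heart'' --- choosing the orbit size $k$ and the bipartition of cone points so that \emph{both} resulting tuples are valid data sets, with the new pair $(c',n/k)$, $(n/k-c',n/k)$ coprime to $n/k$ and the $\lcm$ condition (iv) preserved on each side --- and then leave it unresolved; that verification is precisely where the content of the theorem lies, so the argument is a plan rather than a proof. A further, smaller, inaccuracy: a piece of the decomposition whose quotient carries no cone points need not be a permutation component in the sense of the paper (it is $n$ cyclically permuted copies only when the stabilizers are trivial); a free action on a \emph{connected} invariant subsurface is a rotational piece and must be accounted for differently. Until the $g_0=0$, $\ell=3$ Type~2 case and the data-set bookkeeping for the cuts are actually carried out, the proposal has a genuine gap and cannot replace the citation to \cite{PRS}.
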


 \noindent We will now describe the conjugacy classes of a $k$-compatibe pair $(f_1,f_2)$ of actions in terms of $D_{f_1}$ and $D_{f_2}$.
\begin{definition}
A pair of data sets 
$$D = (n, g_0; (c_1,n_1),\ldots,(c_l,n_l)) \text{ and } \tilde{D} = (\tilde{n}, \tilde{g_0}; (\tilde{c}_1,\tilde{n}_1),\ldots,(\tilde{c}_l,\tilde{n}_{\tilde{l}}))$$  are said to be \emph{$(i,j)$-compatible} if there exist $i,j$ such that
\begin{enumerate}[(i)]
\item $n_i = \tilde{n}_j = m$.
\item $c_i+ \tilde{c}_j \equiv 0 \pmod m$. 
\end{enumerate} 	
Given a pair of $(i,j)$-compatible data sets $D$ and $\tilde{D}$ as above, we define
\begin{gather*}
(D,\tilde{D}) := (m,g_0+\tilde{g}_0; (c_1,n_1),\ldots, \widehat{(c_i,n_i)}, \ldots, (c_l,n_l),  \\ (\tilde{c}_1,\tilde{n}_1), \ldots, \widehat{(\tilde{c}_{i},\tilde{n}_{i})},\ldots,(\tilde{c}_l,\tilde{n}_{\tilde{l}}))
\end{gather*}
\end{definition}

\begin{exmp}\label{(1,5)-compatible exmp}
The $1$-compatible pair $(f,f^5)$ from Section~\ref{sec:intro} is represented by a $(3,3)$-compatible $(D_f, D_{f^5})$ of data sets, where $D_f=(6, 0; (1,2), (1,3), (1,6))$ and $D_{f^5} = (6, 0; (1,2), (2,3), (5,6))$. This construction is illustrated in Figure~\ref{compfig1} below.

\end{exmp}

\begin{figure}
	\centering
\def\svgwidth{\textwidth}
\begingroup%
  \makeatletter%
  \providecommand\color[2][]{%
    \errmessage{(Inkscape) Color is used for the text in Inkscape, but the package 'color.sty' is not loaded}%
    \renewcommand\color[2][]{}%
  }%
  \providecommand\transparent[1]{%
    \errmessage{(Inkscape) Transparency is used (non-zero) for the text in Inkscape, but the package 'transparent.sty' is not loaded}%
    \renewcommand\transparent[1]{}%
  }%
  \providecommand\rotatebox[2]{#2}%
  \newcommand*\fsize{\dimexpr\f@size pt\relax}%
  \newcommand*\lineheight[1]{\fontsize{\fsize}{#1\fsize}\selectfont}%
  \ifx\svgwidth\undefined%
    \setlength{\unitlength}{513.95171449bp}%
    \ifx\svgscale\undefined%
      \relax%
    \else%
      \setlength{\unitlength}{\unitlength * \real{\svgscale}}%
    \fi%
  \else%
    \setlength{\unitlength}{\svgwidth}%
  \fi%
  \global\let\svgwidth\undefined%
  \global\let\svgscale\undefined%
  \makeatother%
  \begin{picture}(1,0.21875801)%
    \lineheight{1}%
    \setlength\tabcolsep{0pt}%
    \put(0,0){\includegraphics[width=\unitlength,page=1]{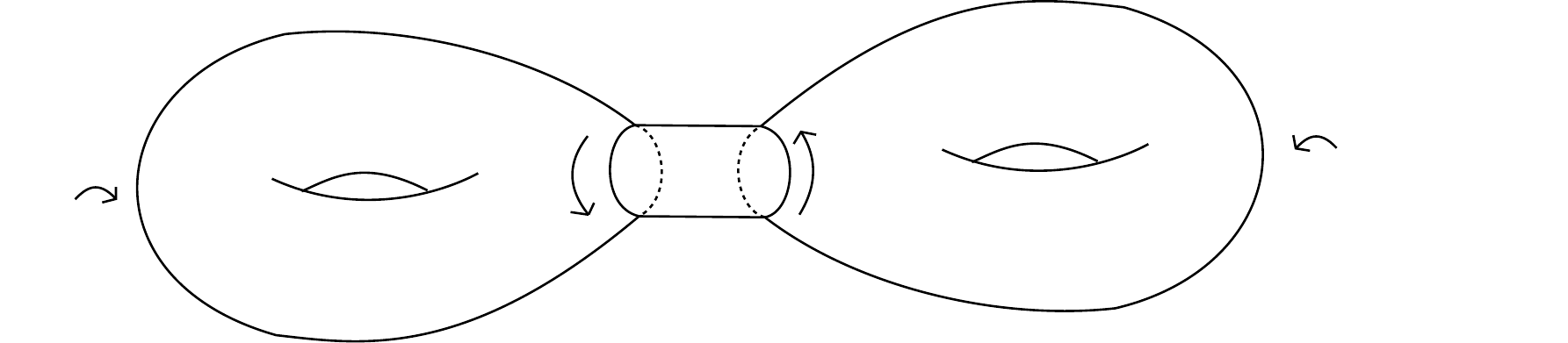}}%
    \put(-0.0013605,0.09008821){\color[rgb]{0,0,0}\makebox(0,0)[lt]{\lineheight{1.25}\smash{\begin{tabular}[t]{l}$\textrm{D}_1$\end{tabular}}}}%
    \put(0.86574058,0.11454131){\color[rgb]{0,0,0}\makebox(0,0)[lt]{\lineheight{1.25}\smash{\begin{tabular}[t]{l}$\textrm{D}_2$\end{tabular}}}}%
    \put(0.32160719,0.10302529){\color[rgb]{0,0,0}\makebox(0,0)[lt]{\lineheight{1.25}\smash{\begin{tabular}[t]{l}$\frac{2 \pi}{6}$\end{tabular}}}}%
    \put(0.52219548,0.09878821){\color[rgb]{0,0,0}\makebox(0,0)[lt]{\lineheight{1.25}\smash{\begin{tabular}[t]{l}$\frac{10 \pi}{6}$\end{tabular}}}}%
  \end{picture}%
\endgroup%

	\caption{A $1$-compatibility between a pair of irreducible order-$6$ maps on the torus.}
	\label{compfig1}
\end{figure}

\begin{definition}\label{def:self_comp_ds}
	For $\ell \geq 4$, let
	$$
	D = (n,g_0; (c_1,n_1), (c_2,n_2),\ldots, (c_{\ell},n_{\ell})),
	$$
	be a $C_n$-action. Then $D$ is said to be $(r,s)$-\textit{self compatible}, if there exist $1 \leq r < s \leq \ell$ such that
	\begin{enumerate}[(i)]
		\item $n_r = n_s = m$, and
		\item $\displaystyle c_r+c_s \equiv 0 \pmod{m}$. 
	\end{enumerate}
\end{definition}

\begin{exmp}
	Consider a Type $2$ action given by the data set $D = (3,1; (1,3), (2,3))$. Consider the irreducible Type $1$ data sets: $D_1 = (3,0; (1,3),$\\$ (1,3), (1,3))$ and $D_2 = (3,0; (2,3), (2,3), (2,3))$. $D_1$ can be realized as a $\frac{2\pi}{3}$-rotation on a hexagon with opposite sides identified (i.e. a torus). $D_2$ can be realized as a $\frac{4\pi}{3}$-rotation on a similar hexagon. These two tori can be glued by removing a $(1,2)$-compatible pair of disks and will give a genus $2$ surface with the following data set: $(3,0; (1,3), (1,3), (2,3), (2,3))$. Now we can remove another $(2,3)$-self compatible pair of disk on that genus $2$ surface and attach a tube. This will give us a genus $3$ surface with data set $D = (3,1; (1,3), (2,3))$, as shown in Figure \ref{selfcomp}.
\end{exmp}

\begin{figure} 
	\centering
\def\svgwidth{7.5cm}
\begingroup%
  \makeatletter%
  \providecommand\color[2][]{%
    \errmessage{(Inkscape) Color is used for the text in Inkscape, but the package 'color.sty' is not loaded}%
    \renewcommand\color[2][]{}%
  }%
  \providecommand\transparent[1]{%
    \errmessage{(Inkscape) Transparency is used (non-zero) for the text in Inkscape, but the package 'transparent.sty' is not loaded}%
    \renewcommand\transparent[1]{}%
  }%
  \providecommand\rotatebox[2]{#2}%
  \newcommand*\fsize{\dimexpr\f@size pt\relax}%
  \newcommand*\lineheight[1]{\fontsize{\fsize}{#1\fsize}\selectfont}%
  \ifx\svgwidth\undefined%
    \setlength{\unitlength}{345.62483132bp}%
    \ifx\svgscale\undefined%
      \relax%
    \else%
      \setlength{\unitlength}{\unitlength * \real{\svgscale}}%
    \fi%
  \else%
    \setlength{\unitlength}{\svgwidth}%
  \fi%
  \global\let\svgwidth\undefined%
  \global\let\svgscale\undefined%
  \makeatother%
  \begin{picture}(1,0.43187916)%
    \lineheight{1}%
    \setlength\tabcolsep{0pt}%
    \put(0,0){\includegraphics[width=\unitlength,page=1]{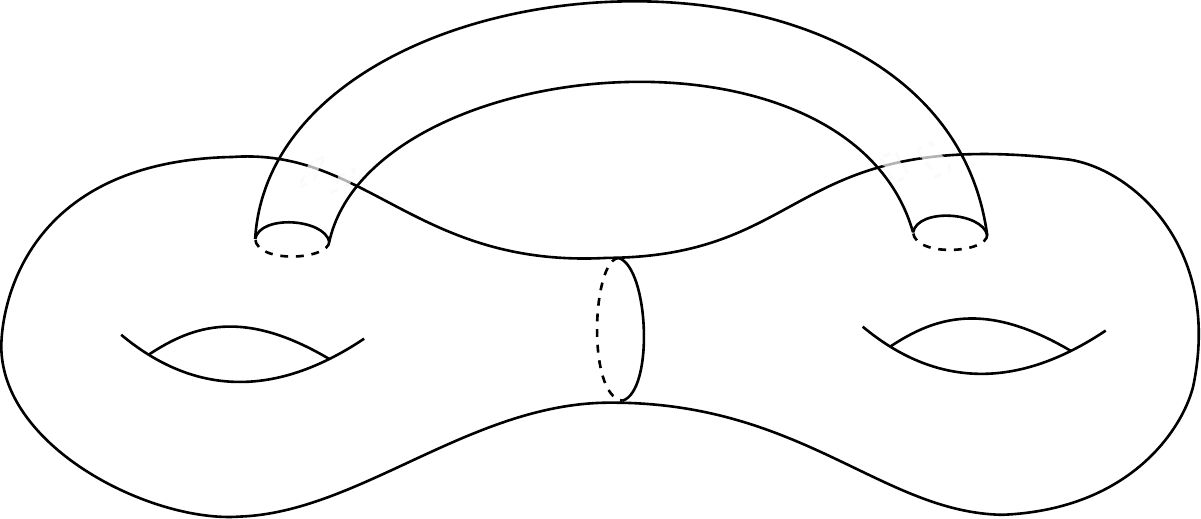}}%
    \put(0.19538645,0.17526836){\color[rgb]{0,0,0}\makebox(0,0)[lt]{\lineheight{1.25}\smash{\begin{tabular}[t]{l}$(1,3)$\end{tabular}}}}%
    \put(0.75618488,0.18058816){\color[rgb]{0,0,0}\makebox(0,0)[lt]{\lineheight{1.25}\smash{\begin{tabular}[t]{l}$(2,3)$\end{tabular}}}}%
  \end{picture}%
\endgroup%

	\caption{Obtaining $(3,1;(1,3),(2,3))$ from $(3,0;(1,3),(1,3),(2,3),(2,3))$ by a $(2,3)$-self compatible gluing.}
	\label{selfcomp}
\end{figure}

\subsection{Fractional Dehn twist coefficient and right veering homeomorphisms} \label{FDTC}
We now only consider elements in $\Mod(\Sigma, \partial \Sigma)$ which are freely isotopic to a periodic or pseudo-periodic homeomorphism. Let $\textrm{Homeo}^+(\Sigma)$ denote the set of orientation preserving homeomorphisms of $\Sigma$.

\begin{definition}
	A map $\phi \in \textrm{Homeo}^+(\Sigma)$, freely isotopic to $h \in \Mod(\Sigma,\partial \Sigma)$, is called a \textit{Thurston representative of} $h$, if it is periodic.
\end{definition} 

\noindent Unlike $h$, $\phi$ may rotate the boundary components of $\Sigma$. If $C \subset \partial \Sigma$ is a boundary component, $\phi\vert_{C}$ is given by a $\frac{2 \pi q}{p}$-rotation for some $p \in \mathbb{Z}_{>0}$ and $q \in \mathbb{Z}$ such that $\vert q \vert \leq p$. The rational number $c(h) = \frac{q}{p}$ is called the \emph{fractional Dehn twist coefficient} ($\textrm{FDTC}$) of $h$ with respect to $C$. For example, the hyperelliptic involution on $\Sigma_g^1$ has FDTC equal to $\frac{1}{2}$.
 
Consider the mapping torus determined by $\phi$. The induced flow, when restricted to a boundary component $C$ of the page has periodic orbits. Let $\gamma$ be one such orbit. Then one can write $\gamma$, in terms of the meridian $\mu$ and longitude $\lambda = C$, as $\gamma = p\lambda + q\mu$, where $p$, $q$ are relatively prime integers.

\begin{definition}\label{fracdehn}
	
	The \emph{fractional Dehn twist coefficient} (FDTC) of $h$ with respect $\partial \Sigma$ is given by $c(h) = \frac{q}{p}$.
	
\end{definition} 

 Note that here we follow the slope convention in~\cite{BEM} which interchanges $p$ and $q$ in the definition of $c(h)$ in \cite{HKM}. The definition is analogous in the case of multiple boundary components, one can similarly define a \emph{fractional Dehn twist coefficient $c_i$} for
the $i$th boundary component.

Following the notation in~\cite{CH}, let $H : \Sigma \times [0, 1] \rightarrow \Sigma$ be the free isotopy from $h(x) = H(x, 0)$ to its periodic representative $\phi(x) = H(x, 1)$. Define $\beta : \partial \Sigma \times [0, 1] \rightarrow \partial \Sigma \times [0, 1]$ by sending $(x, t) \rightarrow (H(x,t), t).$ We form the union of $\partial \Sigma \times [0,1]$ and $\Sigma$ by gluing $\partial \Sigma \times \{1\}$ and $\partial \Sigma$. We then identify this union with $\Sigma$ to construct the homeomorphism $\bar{\beta} \cup \phi$ on $\Sigma$ which is isotopic to $h$ relative to $\partial \Sigma$. Here, $\bar{\beta}$ denotes the reverse isotopy from $t = 1$ to $t = 0$. We will assume that $h = \bar{\beta} \cup \phi$. In terms of this description of $h$, the FDTC is given by the rotation induced by $\beta(x_0,1) - \beta(x_0,0)$. 

We will briefly recall the notion of \emph{right-veering} homeomorphisms from~\cite{CH}. Let $\alpha$ and $\beta$ be isotopy classes, relative to end points, of properly embedded oriented arcs $[0, 1] \rightarrow \Sigma$ with a common initial point $\alpha(0) = \beta(0) = x \in \partial \Sigma$. Choose representatives $a$, $b$ of $\alpha$, $\beta$ ($\alpha \neq \beta$), respectively, so that they intersect transversely (including endpoints) and with minimal number of intersections. Then we say $\beta$ is strictly to the \emph{right} of $\alpha$ if the tangent vectors $(b'(0), a'(0))$ define the orientation
on $\Sigma$ at $x$. A monodromy map $h$ is \emph{right-veering} if for every choice of base point $x \in \partial \Sigma$ and every
choice of arc $\alpha$ based at $x$, either $h(\alpha) = \alpha$ or $h(\alpha)$ is strictly to the right of $\alpha$.
The following proposition (from \cite{HKM}) relates right-veering maps to the FDTC.
\begin{prop}
Let $h \in \Mod(\Sigma, \partial \Sigma)$. Then $h$ is right-veering if and only if $c(h) \geq 0$ for every component of $\partial \Sigma$. Similarly, $h$ is left-veering if and only if $c(h) < 0$ for every component of $\partial \Sigma$.
\end{prop}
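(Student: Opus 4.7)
My plan is to exploit the decomposition $h = \bar{\beta} \cup \phi$ introduced above and localize the analysis to a collar of each boundary component $C \subset \partial \Sigma$. The composition realizes a fractional twist of total angle $2\pi c(h)|_C$ as one moves from the inner edge of the collar (where $h = \phi$) toward $C$ (where $h = \mathrm{id}_C$); consequently, the direction in which $h$ pushes an arc $\alpha$ based at $x \in C$ near $x$ is governed by the sign of $c(h)|_C$.

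To prove the biconditional for right-veering, I handle the contrapositive of one direction first. If $c(h)|_C < 0$ for some component $C$, take any arc $\alpha$ based at $x \in C$; the negative fractional twist in the collar pushes $\alpha$ strictly to the left near $x$, so $h(\alpha)$ is strictly to the left of $\alpha$ and $h$ fails to be right-veering. For the forward direction, suppose $c(h) \geq 0$ on every component and fix an arc $\alpha$ based at $x \in C$. If $c(h)|_C > 0$, the positive fractional twist pushes $\alpha$ strictly to the right near $x$. If $c(h)|_C = 0$, then $\phi|_C = \mathrm{id}_C$, and since a nontrivial orientation-preserving periodic homeomorphism of a connected surface has only isolated fixed points, $\phi$ must in fact be the identity on a neighborhood of $C$ (on all of $\Sigma$ when $\Sigma$ is connected). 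Hence $h$ is trivial on a collar of $C$, and the only contribution to $h(\alpha) \neq \alpha$ comes from positive twists at the other endpoint(s) of $\alpha$, which can only push $\alpha$ weakly to the right. In every case $h(\alpha)$ is either equal to $\alpha$ or strictly to the right, so $h$ is right-veering.

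The left-veering statement reduces to the right-veering one by duality: $h$ is left-veering precisely when every arc is sent strictly to the left, which, via the substitution $\beta = h(\alpha)$, is equivalent to $h^{-1}$ sending every arc strictly to the right with no arc fixed. Applying the right-veering equivalence to $h^{-1}$ and using the identity $c(h^{-1}) = -c(h)$ on each component, this translates to $c(h) < 0$ on every component of $\partial \Sigma$. The main obstacle I anticipate is in the case $c(h)|_C = 0$, where the local twist vanishes and one must combine the global rigidity of the periodic $\phi$ with a careful analysis of how positive twists at other boundary components propagate toward $C$; in particular, one has to verify that when the tangent vectors of $\alpha$ and $h(\alpha)$ coincide at $x$, the arcs can be perturbed to minimal transverse intersection so that $h(\alpha)$ lies weakly to the right, rather than slipping to the left due to global wrapping.
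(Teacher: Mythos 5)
A preliminary remark: the paper does not prove this proposition at all --- it is quoted from Honda--Kazez--Mati\'c \cite{HKM}, and it is used here under the standing assumption (made at the start of the subsection on fractional Dehn twist coefficients) that $h$ is freely isotopic to a periodic representative. Your argument implicitly relies on that periodicity (finite order, isolated fixed points), which is consistent with the intended scope, but it should be stated; for general mapping classes the $c=0$ case is handled differently in \cite{HKM}.

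As a proof, your skeleton is the right one, but the two places where the real work lies are asserted rather than proved. (1) The central claim --- that the sign of the boundary rotation of the periodic representative determines, after isotoping $\alpha$ and $h(\alpha)$ to minimal position, on which side of $\alpha$ the arc $h(\alpha)$ lies at $x$ --- is essentially the whole content of the proposition, and you name the danger (``global wrapping'') yourself without resolving it. The standard resolution is to fix a hyperbolic metric with geodesic boundary for which the periodic representative $\phi$ is an isometry, or equivalently to pass to the universal cover and compare lifts of $\alpha$ and $h(\alpha)$ along $\partial\widetilde{\Sigma}$; geodesic (or lifted) representatives realize minimal position, and the comparison at $x$ is then read off from the rotation angle. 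Without some such device, ``the local twist determines the side'' is a restatement of the goal, not a step. (2) In the case $c(h)|_C=0$ you correctly reduce, via the isolated-fixed-point argument, to $\phi=\mathrm{id}$ on the component containing $C$, so that $h$ is a product of nonnegative powers of boundary-parallel Dehn twists; but the assertion that twists based at the other boundary components ``can only push $\alpha$ weakly to the right'' is precisely the nontrivial inclusion of $\mathrm{Dehn}^+(\Sigma,\partial\Sigma)$ into the right-veering monoid, which needs its own proof or citation. (3) In the left-veering reduction, applying the right-veering equivalence to $h^{-1}$ (with $c(h^{-1})=-c(h)$) only gives $c(h)\leq 0$ on every component; you must still use the ``no fixed arc'' condition to exclude $c(h)|_C=0$ on some component, e.g.\ by exhibiting, when $\phi=\mathrm{id}$ near $C$, an essential arc based at $C$ that avoids all the twisting annuli and is therefore fixed by $h$. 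None of these are wrong turns, but as written they are the statements to be proved rather than steps of a proof.
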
 
	
\subsection{Rational open book decomposition}
	
	A \emph{rational open book decomposition} for a manifold $M$ is a pair $(L,\pi)$ consisting of an oriented link $L \subset M$ and a fibration $\pi : M \setminus L \rightarrow S^1$ such that, if $N$ is a small tubular neighborhood of $L$, then no component of $\partial N \cap \pi^{-1}(\theta)$ is a meridian of a component of $L$. As mentioned in \cite{BEM}, a rational open book may differ from an \emph{honest} open book in the following two ways.
	\begin{enumerate}[(a)]
		\item A component of $\partial N \cap \pi^{-1}(\theta)$ does not have to be a longitude to a component of $L$.
		\item A component of $\partial N$ intersected with $\pi^{-1}(\theta)$ does not have to be connected.
	\end{enumerate}	

   As in the case of an honest open book, $\Sigma = \overline{\pi^{-1}(\theta)}$ is called a \emph{page} of the rational open book for any $\theta \in S^1$ and $L$ is called the \emph{binding}. Similar to the honest open book, one can describe a rational open book using the monodromy map $\phi_0 : \Sigma \rightarrow \Sigma $ of the fibration $\pi$. In case of honest open books the monodromy map is assumed to be identity near the boundary of a page. For rational open books we require that near boundary $\phi_0^m = id$ for some integer $m$.
   
\subsection{Contact structures on rational open books}\label{contact structure on rob}
We say a rational open book $(L, \pi)$ for $M$ supports a contact structure $\xi$ if there is a
contact form $\alpha$ for $\xi$ such that the following conditions hold.
\begin{enumerate}
	\item $\alpha(v) > 0$ for all positively pointing tangent vectors $v \in TL$.
	\item $d\alpha$ is a volume form when restricted to each page of the open book.
\end{enumerate}
	
\noindent We recall the Thurston-Winkelnkemper construction of contact structure on a rational open book from \cite{BEM}.	
	
Let $(\Sigma, \lambda)$ be the page of a rational open book, where $\lambda$ is a one form such that $d\lambda$ is an area form on $\Sigma$ and $\lambda = r d\theta$ near $\partial \Sigma \subset (\partial \Sigma \times [-1, -1 + \epsilon], (\theta, r))$ for some sufficiently small $\epsilon > 0$. Let $\lambda_{(t,x)} = t \lambda_x + (1-t) (\phi^*\lambda)_x$ be a $1$-form on $(\Sigma \times [0,1], (x,t))$. Then the $1$-form $\alpha_K = \lambda_{(t,x)} + K dt$ is contact for $K$ large enough. Thus, $\alpha_K$ defines a contact form on the mapping torus $\MT(\Sigma,\phi)$. Next, we extend this $1$-form over the solid tori neighborhood of the binding. Here we describe the extension for a single binding component. Let $(S^1 \times D^2, (\theta, r, \phi))$ be a neighborhood of a binding component. Consider the following gluing map between a boundary neighborhood $S^1 \times N(\partial D^2)$ of $S^1 \times D^2$ and a boundary neighborhood $N(\partial \MT)$ of the mapping torus.

\begin{alignat*}{2}
\psi: S^1 \times N(\partial D^2) &\longrightarrow&  N(\partial \MT)\\
(\theta,r,\phi) &\longmapsto& \   \ (-r, p\theta + q\phi, -q\theta + p \phi) 
\end{alignat*}

\noindent An example of this gluing map, which is defined for $r \in [1-\epsilon, 1]$, is illustrated in Figure~\ref{fig:gluing_map} below. Figure \ref{permglue} represents the gluing region for an orbit of $3$ cone points constituting $3$ permuting boundary components. 

\begin{figure}[htbp]
    \centering
\def\svgwidth{\textwidth}
\begingroup%
  \makeatletter%
  \providecommand\color[2][]{%
    \errmessage{(Inkscape) Color is used for the text in Inkscape, but the package 'color.sty' is not loaded}%
    \renewcommand\color[2][]{}%
  }%
  \providecommand\transparent[1]{%
    \errmessage{(Inkscape) Transparency is used (non-zero) for the text in Inkscape, but the package 'transparent.sty' is not loaded}%
    \renewcommand\transparent[1]{}%
  }%
  \providecommand\rotatebox[2]{#2}%
  \newcommand*\fsize{\dimexpr\f@size pt\relax}%
  \newcommand*\lineheight[1]{\fontsize{\fsize}{#1\fsize}\selectfont}%
  \ifx\svgwidth\undefined%
    \setlength{\unitlength}{426.42065891bp}%
    \ifx\svgscale\undefined%
      \relax%
    \else%
      \setlength{\unitlength}{\unitlength * \real{\svgscale}}%
    \fi%
  \else%
    \setlength{\unitlength}{\svgwidth}%
  \fi%
  \global\let\svgwidth\undefined%
  \global\let\svgscale\undefined%
  \makeatother%
  \begin{picture}(1,0.27454699)%
    \lineheight{1}%
    \setlength\tabcolsep{0pt}%
    \put(0,0){\includegraphics[width=\unitlength,page=1]{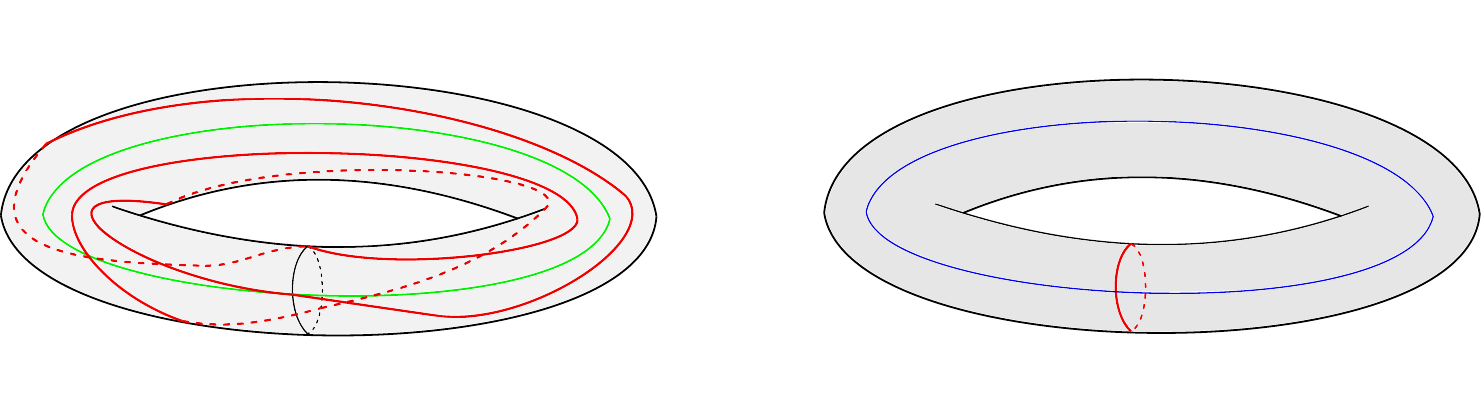}}%
    \put(0.68952188,0.00588857){\color[rgb]{0,0,0}\makebox(0,0)[lt]{\lineheight{1.25}\smash{\begin{tabular}[t]{l}$S^1 \times D^2$\end{tabular}}}}%
    \put(0.0715832,0.00565994){\color[rgb]{0,0,0}\makebox(0,0)[lt]{\lineheight{1.25}\smash{\begin{tabular}[t]{l}$(\partial S)_1 \times S^1$\end{tabular}}}}%
    \put(0,0){\includegraphics[width=\unitlength,page=2]{gluing0.pdf}}%
    \put(0.46177958,0.2567297){\color[rgb]{0,0,0}\makebox(0,0)[lt]{\lineheight{1.25}\smash{\begin{tabular}[t]{l}$\psi$\end{tabular}}}}%
    \put(0,0){\includegraphics[width=\unitlength,page=3]{gluing0.pdf}}%
  \end{picture}%
\endgroup%

	\caption{The meridian on the right is sent to the $(3,2)$-curve (in red) on the left. For a honest/integral open book, the meridian is sent to the longitude (in green).}
	\label{fig:gluing_map}
\end{figure}

\begin{figure}[htbp]
	\centering
	\def\svgwidth{\textwidth}
 \scalebox{0.5}{
\begingroup%
  \makeatletter%
  \providecommand\color[2][]{%
    \errmessage{(Inkscape) Color is used for the text in Inkscape, but the package 'color.sty' is not loaded}%
    \renewcommand\color[2][]{}%
  }%
  \providecommand\transparent[1]{%
    \errmessage{(Inkscape) Transparency is used (non-zero) for the text in Inkscape, but the package 'transparent.sty' is not loaded}%
    \renewcommand\transparent[1]{}%
  }%
  \providecommand\rotatebox[2]{#2}%
  \newcommand*\fsize{\dimexpr\f@size pt\relax}%
  \newcommand*\lineheight[1]{\fontsize{\fsize}{#1\fsize}\selectfont}%
  \ifx\svgwidth\undefined%
    \setlength{\unitlength}{312.1307855bp}%
    \ifx\svgscale\undefined%
      \relax%
    \else%
      \setlength{\unitlength}{\unitlength * \real{\svgscale}}%
    \fi%
  \else%
    \setlength{\unitlength}{\svgwidth}%
  \fi%
  \global\let\svgwidth\undefined%
  \global\let\svgscale\undefined%
  \makeatother%
  \begin{picture}(1,1.1752835)%
    \lineheight{1}%
    \setlength\tabcolsep{0pt}%
    \put(0,0){\includegraphics[width=\unitlength,page=1]{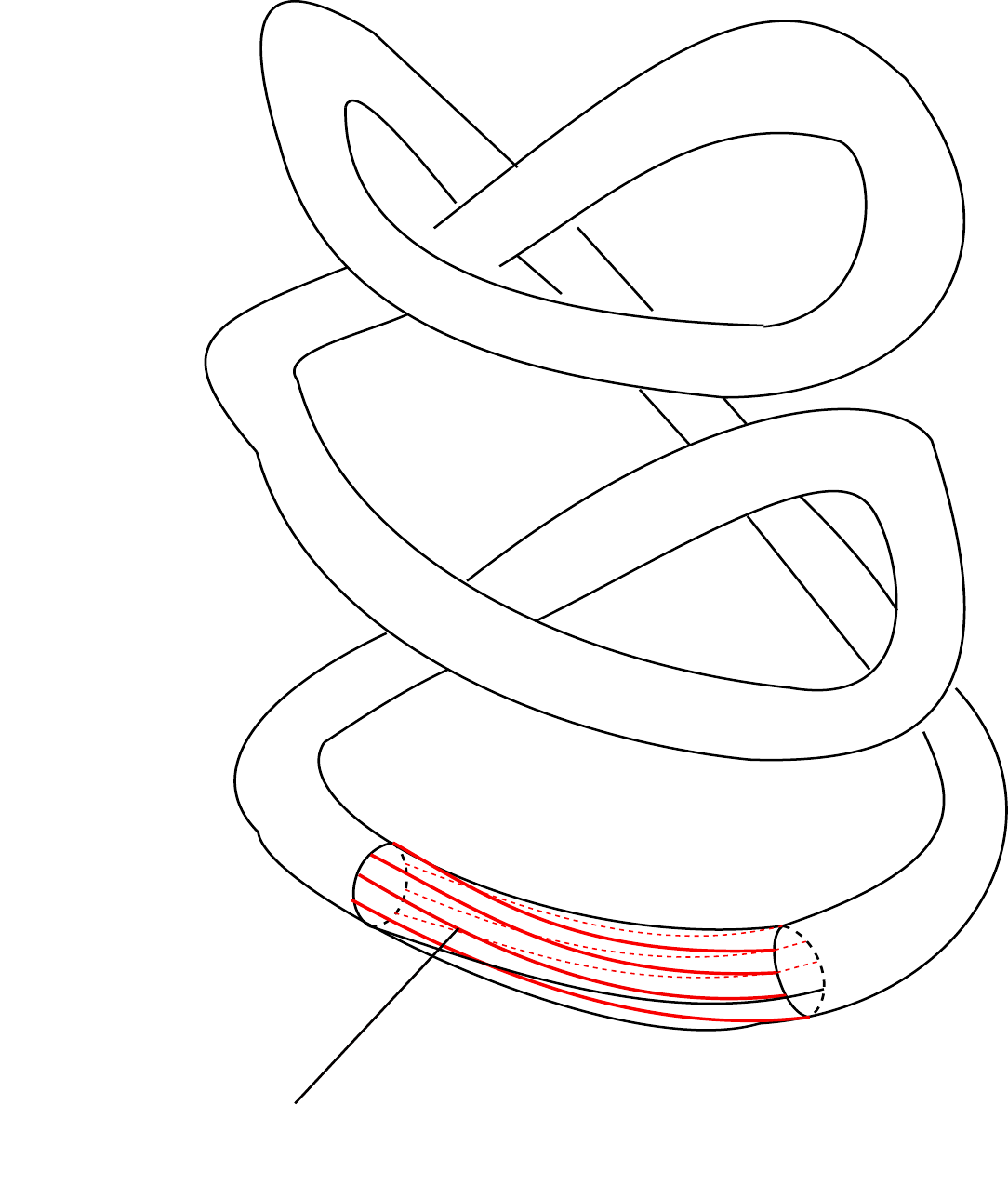}}%
    \put(-0.0037445,0.01104202){\color[rgb]{0,0,0}\makebox(0,0)[lt]{\lineheight{1.25}\smash{\begin{tabular}[t]{l}$(c_i^{-1},n_i)$-curve\end{tabular}}}}%
  \end{picture}%
\endgroup%
}
	\caption{Gluing of solid torus near permuting boundaries}
	\label{permglue}
\end{figure}

 Using this gluing map we define the total manifold of the corresponding rational book as $M_{\phi} = \MT(\Sigma, \phi) \cup_{\phi} (S^1 \times D^2)$. The pullback form is given by $\psi^*\alpha_K = (-rp-Kq)d\theta + (-rq + pK)d\phi$. We now extend this form using a form $f_0(r)d\theta + g_0(r)d\phi$. This form will be contact if and only if $f_0(r)g'_0(r) - f'_0(r)g_0(r) > 0$. Near $S^1 \times D^2$ (i.e. $r = 1$), $f_0(r) = -rp-qK$ and $g_0(r) = -rq + pK$. Near the core of $S^1 \times D^2$ (i.e. $r = 0$), $f_0(r) = 2-r^2$ and $g_0(r) = r^2$. We can then extend the functions $f_0$ and $g_0$ to define a contact form on whole of $S^1 \times D^2$. Moreover, this extension is unique. For more details on rational open books, we refer to \cite{BEM}.
	
\subsection{From integral open books to rational open books} \label{integral to rational}
One can see a rational open book of $M$ more explicitly as an abstract integral open book $\OB(\Sigma,\phi)$ with some modification in a neighborhood of the binding $\partial \Sigma$. Let $L$ be a connected component of $\partial \Sigma$. We take a solid torus neighborhood $U_L$ of $L$ and consider the identification of $\partial U_L$ with $\partial \MT(\Sigma,\phi)$. Let $\lambda$ and $\mu$ denote the reference longitude and meridian on $\MT(\Sigma,\phi)$. Then $\mu$ approaches $L$ along the $(1,0)$-curve on $\partial U_L$. We cut out $U_L$ from $\OB(\Sigma,\phi)$ and glue in a solid torus $U_0$, with longitude $\lambda_0$ and meridian $\mu_0$, by sending $\mu_0$ to $p\lambda + q\mu$. This is same as a topological $\frac{p}{q}$-surgery on $\OB(\Sigma,\phi)$ along $L$. Let $L_0$ denote the core of $U_0$. Note that $\mu$ now approaches $L_0$ via the $(p,q)$-curve on $\partial U_0$. The surgered manifold $M_{\frac{p}{q}}(L)$ admits an abstract open book decomposition with page $\Sigma$ and monodromy $\phi \circ \partial_{\frac{2\pi q}{p}}$. Here, $\partial_{\frac{q}{p}}$ denotes the $\frac{2\pi q}{p}$-rotation on the boundary component of $\Sigma$ aproaching $L_0$. We will denote this \emph{rational abstract open book} by $\ROB(\Sigma, \phi \circ \partial_{\frac{q}{p}})$. For $\vert p \vert = \pm 1$, this rational open book is an integral open book. 

\section{Associating contact structures to data sets}\label{marked data sets}
 Let $h$ be a pseudo-periodic homeomorphism on a closed surface $\Sigma$, such that $\Sigma$ can be decomposed into disjoint connected sub-surfaces $\Sigma_{g_i}$s so that $f_i = h\vert_{\Sigma_{g_i}}$ is an irreducible periodic map. In other words, $h$ can be decomposed into finitely many compatible irreducible Type $1$ actions.

\subsection{\textbf{Associating contact structures to Type $1$ data sets}} Let us first associate a contact structure to the data set corresponding to an irreducible Type $1$ action. The idea is to somehow associate a page and a monodromy of a rational open book. Since a $C_n$-action realized by a data set lives on a closed surface, to obtain a page with periodic homeomorphism one needs to remove disks around some of the non-trivial orbit points of that action. Therefore, we define something called a \emph{marked data set}. Let us first discuss the notion for a specific example.

\begin{exmp}
	Consider the data set $D_{\phi_0} = (5, 0; (1,5), (3,5), (1,5))$ that represents the homeomorphism $\phi_0$ of a genus-$2$ closed surface $\Sigma_2$, induced by a $\frac{2\pi}{5}$-rotation of a $10$-gon with opposite sides identified as in Figure \ref{rotatingon}. This is an irreducible Type $1$ periodic homeomorphism. This map has three fixed points, two of which have $\frac{2\pi}{5}$-rotation in their disk neighborhoods, while the third has a $\frac{4\pi}{5}$-rotation.
	
	Now consider the surface $\tilde{\Sigma}_2$, obtained from $\Sigma_2$ by removing two disk neighborhoods, both with rotation $\frac{2\pi}{5}$. Let $\tilde{\phi}_0$ denote the restriction homeomorphism $\phi_0|_{\tilde{\Sigma}_2}$. We fix an ordering of the cone points in the data set and let the last cone point denote the center of rotation in the covering polygon. We can index the ordering as $\{1,2,3\}$, and we mention the indexing numbers of the cone points around which we are removing the disks. In our case, if we fix the order of the cone points as written in $D$, then inclusion of $[1,3]$ in the data set will say that disks around the first and third cone points are removed. Another aspect we need to take into consideration is the direction of rotation. Note that the data set $D$ can be realized either by a  $+\frac{2 \pi}{5}$-rotation or by a  $-\frac{8\pi}{5}$-rotation. 
	
	\begin{figure}[htbp]
		\centering
	\def\svgwidth{\textwidth}
	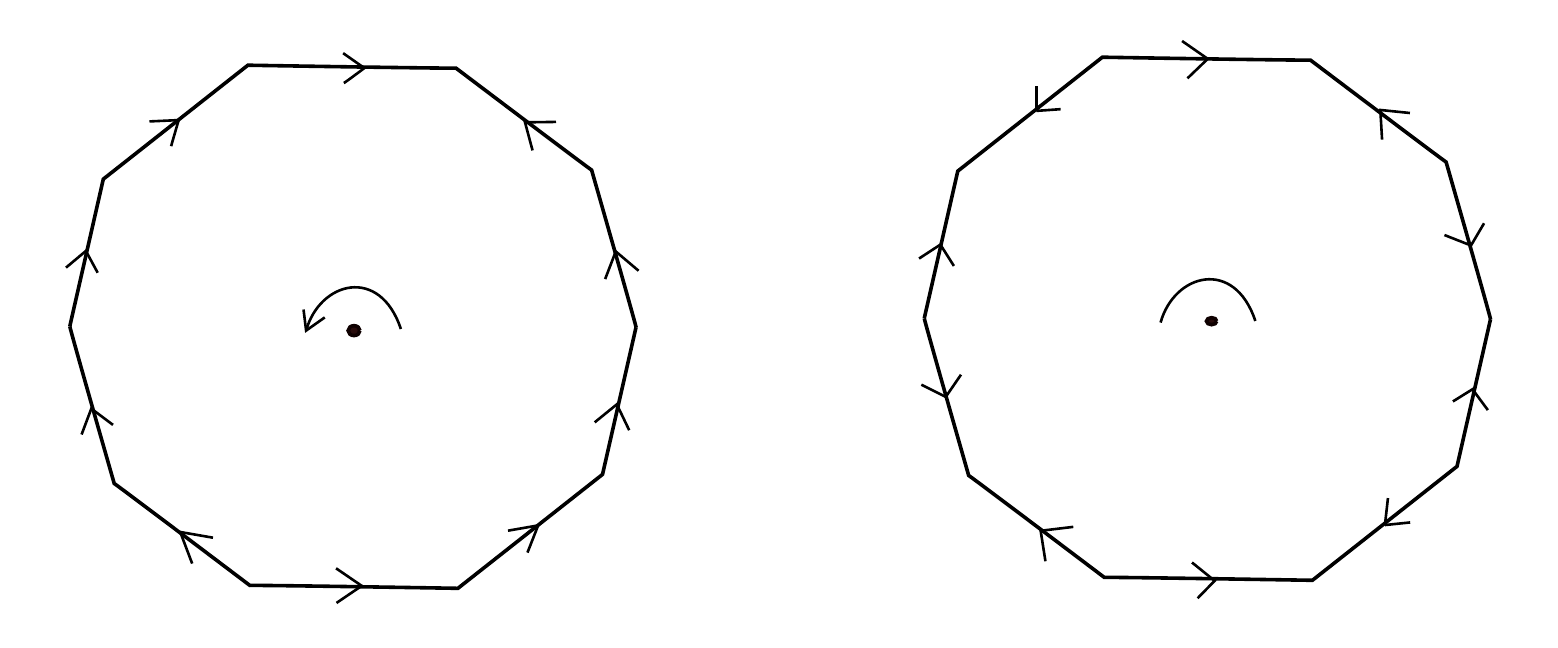
		\caption{Order $5$ rotation on a $10$-gon. The orbits among cone points are denoted by similar colours.}
		\label{rotatingon}
	\end{figure}

	\noindent The two cases will make a huge difference in terms of contact structures. So, we simply add a plus or minus sign with the first entry of the data set. In particular, a marked data set like $\hat{D}_1 = (5_+, 0; (1,5), (3,5), (1,5), [1,3])$ will represent the map $\tilde{\phi}_0$ on $\tilde{\Sigma_2}$. Similarly, $\hat{D}_2 = (5_-, 0; (1,5), (1,5), (3,5), [1,2,3])$ will represent a homeomorphism on a surface, obtained from $\Sigma_2$ by removing three disks around three fixed points, and the homeomorphism will be restriction of a homeomorphism on $\Sigma_2$ induced by a $-\frac{6\pi}{5}$-rotation of the $10$-gon.
\end{exmp} 

In general, let $D_{h_0} = (n_3, 0;(c_1,n_1),(c_2,n_2),(c_3,n_3))$ be a data set representing an irreducible periodic homeomorphism $h_0$ on a closed surface $\Sigma_{g_0}$. By Theorem \ref{res:1}, the conjugacy class of $h_0$ is realized by a $\frac{2 \pi c_3^{-1}}{n_3}$-rotation on an even sided polygon with appropriate side-pairing. This means there is $g \in \Homeo^+(\Sigma_{g_0})$ such that $ghg^{-1}$ is isotopic to $R_0$, where $R_0$ is the map induced on $\Sigma_0$ from the rotation on the polygon. Assume that $n_1, n_2 < n_3$. Thus, $h = R_0$ has a fixed point $x_{31}$ on $\Sigma_{g_0}$, around which $h$-induces a $\frac{2 \pi c_3^{-1}}{n_3}$-rotation. 
Moreover, there are orbits with $\frac{n}{n_i}$ points such that $h$ cyclically permutes the orbit points, and $h^{\frac{n}{n_i}}$ induces a $\frac{2 \pi c_i^{-1}}{n_i}$-rotation around each orbit point, for $i = 1,2$ (i.e. $h$ induces $\frac{2 \pi c_i^{-1}}{n_3}$-rotation). We have $N = (1 + \frac{n_3}{n_1} + \frac{n_3}{n_1})$ many cone points. Here also, the given data set can be realized either by a  $+\frac{2 \pi c_3^{-1}}{n_3}$-rotation or by a  $-\frac{2 \pi (n_3 - c_3^{-1})}{n_3}$-rotation.
 
\begin{definition}[Type $1$ marked data set]
A modified data set of the form $$\hat{D}_\pm = ({n_3}_\pm, 0; (c_1 , n_1),(c_2, n_2),(c_3,n_3), [j_1,\dots,j_k])$$ will be called a Type $1$ \emph{marked data set}.
\end{definition}

\noindent Thus, a Type $1$ marked data set $\hat{D}_\pm = ({n_3}_\pm, 0;(c_1,n_1),(c_2,n_2),(c_3,n_3),[j_1,$\\$\cdots,j_k])$ represents a surface $\hat{\Sigma}_{g_0}$, obtained from $\Sigma_{g_0}$ by removing some $l$-many open disks ($1 \leq l \leq N$), and the restriction homeomorphism ${\hat{R}_0}\vert_{\hat{\Sigma}_{g_0}}$. We associate the induced contact structure on $\ROB(\hat{\Sigma}_0, \hat{R}_0)$ to the marked data set $\hat{D}_\pm$. Note that the association of a contact structure to a marked data set as above is well-defined.	

\subsection{\textbf{Associating contact structures to Type $2$ data sets and their relation to Type $1$ data sets}} 

Similar to the previous section, one can define marked data sets of Type $2$. These are simply collection of compatible Type $1$ data sets with extra marking of points. We describe the association for some specific examples. 

\begin{exmp}\label{type 2 exmp 1}
	
	Consider the data set $(6,0 ; (1,2), (1,2), (1,3), (2,3))$ from Example \ref{(1,5)-compatible exmp}. This data set can be realized by combining two $(1,5)$-compatible Type $1$ data sets: 
	$$D_1 = (6,0;(1,2),(1,3),(1,6)) \text{ and } D_2 = (6,0;(1,2),(2,3),(5,6)).$$
	
	Now consider two marked data sets coming from these Type $1$ data sets: 
	\begin{gather*}
	\hat{D}_{\phi_1} = ((6_+, 0; (1,2), (1,3), (1,6), [1,3]) \text{ and } \\\hat{D}_{\phi_2} = ((6_-, 0; (1,2), (2,3),(5,6), [1,2,3]). 
	\end{gather*} Let $(\Sigma_{g_1}, \phi_1)$ and $(\Sigma_{g_2}, \phi_2)$ be the representative surfaces and monodromies corresponding to $\hat{D}_{\phi_1}$ and $\hat{D}_{\phi_2}$ respectively. We denote a compatible gluing by the following notation. A gluing represented by $(2:1) \sim (3:2)$ will mean gluing the second boundary component of the first surface to the third boundary component of the second surface. The notation : $([\hat{D}_{\phi_1}, \hat{D}_{\phi_2}], (3:1) \sim (3:2))$, will then represent the surface $\Sigma_{g_{12}}$, obtained by gluing an annulus between $\Sigma_{g_1}$ and $\Sigma_{g_2}$ along their boundary components with rotations $\frac{2\pi}{6}$ and $-\frac{2\pi}{6}$ respectively, and the homeomorphism $\phi_{12}$, obtained as a union of $\phi_1$ and $\phi_2$ and then extended by the identity map on the connecting annulus. In other words we have a $(2,3)$-compatible gluing of $\hat{D}_{\phi_1}$ and $\hat{D}_{\phi_2}$. 
	
	Now, $\phi_i$ can be isotoped, relative to boundary, to $h_i \circ \delta_i$, where $h_i \in \Mod(\Sigma_{g_i},\partial \Sigma_{g_i})$ and $\partial_i$ denotes the rotations on boundary components, for $i = 1,2$. More precisely, $\partial_1$ is the union of rotations on the four boundary components of $\Sigma_{g_1}$- one invariant boundary component with $\frac{2\pi}{6}$-rotation (corresponding to $(1,6)$ in $\hat{D}_{\phi_1}$) and three permuting boundaries with $\frac{2\pi}{6}$-rotation (corresponding to $(1,2)$ in $\hat{D}_{\phi_1}$) on each of them. Similarly, $\partial_2$ consists of three permuting boundaries with $-\frac{2\pi}{6}$-rotations, two permuting boundaries with $-\frac{4\pi}{6}$-rotations and one boundary with $-\frac{2\pi}{6}$-rotation. Let us rewrite $h_i$ as $\hat{h}_i \circ \beta_i$, for $i = 1,2$. Here, $\beta_i$ is the restriction of the free isotopy between $\phi_i$ and $h_i$ on the non-permuting boundary components (which are to be glued together) and $\hat{h}_i = \phi_i \cup \bar{\beta_i}$. It can be seen from Figure \ref{Fig 3} that the resulting homeomorphism $\phi_{12}$ on $\Sigma_{g_{12}}$ is given by $\hat{h}_1 \cup id. \cup \hat{h}_2$. 
	
\begin{figure}[H]
	\centering
	\def\svgwidth{\textwidth}
	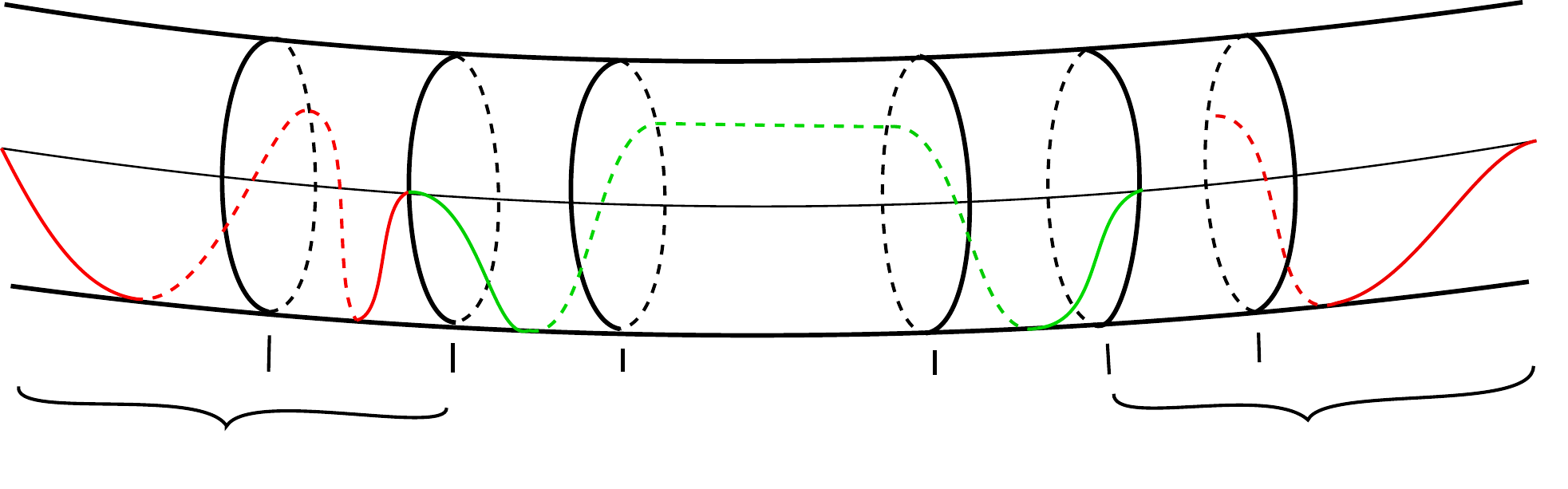
	\caption{Monodromy after gluing along compatible boundaries with rotations of different signs.}
	\label{Fig 3}
\end{figure}
\end{exmp}

\begin{exmp}\label{type 2 exemp 2}
	If we had the following Type $2$ collection of marked data sets : $\hat{D}_{\phi_1} = (6_+, 0; (1,2), (1,3), (1,6), [1,3])$ and $\hat{D}_{\phi_2} = (6_+, 0; (1,2), (2,3),(5,6), [1,2,3])$, then a similar analysis as in Example \ref{type 2 exmp 1} will show that $([\hat{D}_{\phi_1}, \hat{D}_{\phi_2}], (3:1) \sim (3:2))$ represents the surface $\Sigma_{g_{12}}$ with homeomorphism of the form $\hat{h}'_1 \cup T_{c_0} \cup \hat{h}_2$, as shown in Figure~\ref{Fig 4} below. 
\begin{figure}[htbp]
	\centering
\def\svgwidth{\textwidth}
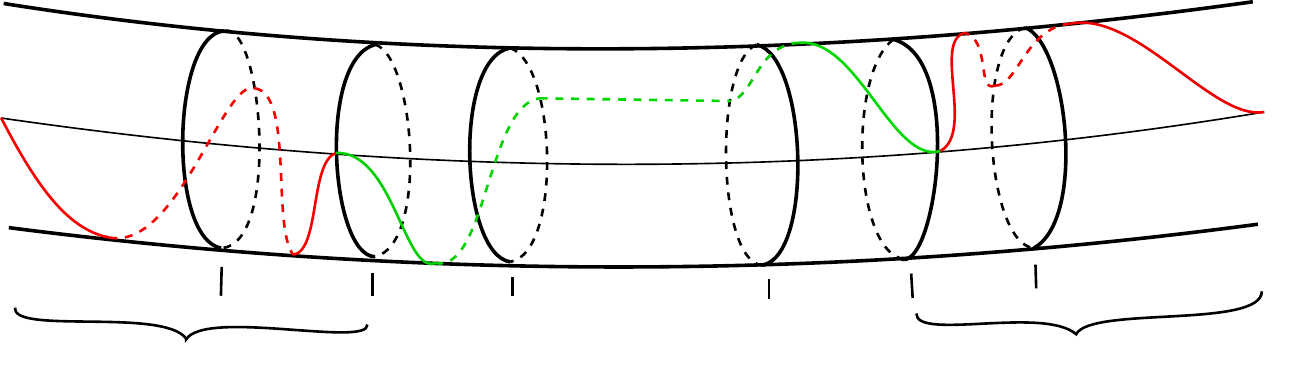
	\caption{Monodromy after gluing along compatible boundaries with rotations of same sign.}
	\label{Fig 4}
\end{figure}
	
\noindent Here, $T_{c_0}$ denotes positive Dehn twist along the curve $c_0$. Moreover, $\hat{D}_1 = (6_-, 0; (1,2), (1,3), (1,6),[1,3])$ and $\hat{D}_2 = (6_-, 0; (1,2), (2,3),(5,6), [1,2,3])$  will have a homeomorphism of the form $\hat{h}'_1 \cup T^{-1}_{c_0} \cup \hat{h}'_2$. One can similarly look at the resulting homeomorphisms for self compatible data sets.
\end{exmp}

Note that we order the cone points of a marked data sets that has been obtained by compatible gluing of two other marked data sets by writing the cone point entries of $\hat{D}_1$ followed by those of $\hat{D}_2$ and then removing the cone points which are killed in compatible or self-compatible gluing. Thus in general we can define a Type $2$ \emph{marked data set} as follows.

\begin{definition}
	A modified data set of the form $$\hat{D} = ({n_l}_\pm,g_0;(c_1,n_1),(c_2,n_2),\dots,(c_l,n_l), [j_1,\dots,j_k])$$ is called a Type $2$ \emph{marked data set}.
\end{definition}

\begin{proof}[Proof of Proposition \ref{association thm}]
	
We associate a contact structure to an arbitrary Type $2$ marked data set $\hat{D}$ in the following way. By \cite[Theorem 2.24]{PRS}, every Type $2$ data set can be constructed from finitely many compatible or self-compatible irreducible Type $1$ data sets. Thus, we first consider the finitely many rational open books associated to those compatible Type $1$ marked data sets and then inductively glue the pages and homeomorphisms as in Example \ref{type 2 exmp 1} and Example \ref{type 2 exemp 2}, to give the resultant rational open book. The contact structure compatible with this resultant rational open book gives the contact structure associated to $\hat{D}.$

\end{proof}

\section{Symplectic fillability of rational open books}\label{fillability}
\label{sec:symp_fill}
In this section we prove our main results. Before going into the proofs, we briefly review the notion of admissible transverse surgery on a contact manifold from \cite{BEM}.

\subsection{Admissible transverse surgery} Let $K \subset (M,\xi)$ be a transverse knot with a fixed framing $F$. 

\begin{definition}\label{transverse}
	A $\frac{q}{p}$-surgery on $K$ is called admissible if there exists a neighborhood $N \subset M$ of $K$ that is contactomorphic to a neighborhood $N_{r_0} = \{(r,\theta,z) \vert r \leq \sqrt{r_0}\}$ of the $z$-axis in $\mathbb{R}^3/(z \equiv z + 1)$ with the contact structure $\xi_0 = ker(dz + r^2 d\theta)$ such that $F$ goes to the product framing on $N_{r_0}$ and $- \infty < \frac{q}{p} < -\frac{1}{r_0}$.  
\end{definition}

If $M_K(\frac{q}{p})$ is obtained from $M$ by an admissible transverse surgery, then $M_K(\frac{q}{p})$ admits a natural contact structure $\xi_K(\frac{q}{p})$ on it. This contact structure is defined by a contact cut or reduction process. One takes a neighborhood of $K$ as in definition \ref{transverse} and considers the characteristic foliations on the tori at different radii from the central knot. Topologically, a $\frac{q}{p}$-surgery sends the $(p,q)$-curve on a torus to the boundary of a disk. For more on contact cuts we refer to \cite{BE}.  

In some cases, one can recover a contact rational open book via admissible transverse surgery on an honest contact open book. In particular, recall the rational contact open book $\ROB(\Sigma, \phi \circ R_{\frac{q}{p}})$ and the honest contact open book  $\OB(\Sigma, \phi)$ as described in section \ref{integral to rational} with $L = \partial \Sigma$. We assume that both $p$ and $q$ are positive.

\begin{lemma} \label{binding surgery}
  $\ROB(\Sigma, \phi \circ R_{\frac{q}{p}})$ is obtained from $\OB(\Sigma, \phi)$ via a	$-\frac{p}{q}$-transverse surgery on $L$.
\end{lemma}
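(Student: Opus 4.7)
The plan is to verify the claim in two stages: a topological identification of the underlying $3$-manifolds, and a matching of the contact structures. For the topological part, I would invoke the surgery description of rational open books from Section \ref{integral to rational}: $\ROB(\Sigma,\phi\circ R_{\frac{q}{p}})$ is obtained from $\OB(\Sigma,\phi)$ by a topological $\frac{p}{q}$-surgery on $L$ with the page framing as reference longitude. On the other hand, an admissible transverse $\frac{q'}{p'}$-surgery in the sense of Definition \ref{transverse} is a topological surgery of slope $\frac{q'}{p'}$ with respect to the product framing $F$ on the standard model $N_{r_0}$. Identifying a tubular neighborhood of $L$ in $(M_\phi,\xi_\phi)$ with $N_{r_0}$ via a standard contactomorphism, the page framing corresponds to $-F$, so the topological $\frac{p}{q}$-surgery in page-framing coordinates becomes a $-\frac{p}{q}$-surgery in product-framing coordinates, producing the required coefficient.

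Next I would verify admissibility and match the contact structures. Near $L$, the Thurston-Winkelnkemper form is $K\,dt + r\,d\theta$ and, after rescaling, is contactomorphic to $\xi_0=\ker(dz + r^2\,d\theta)$ on a neighborhood $N_{r_0}$ of the $z$-axis; by shrinking $r_0$, the admissibility bound $-\frac{p}{q}<-\frac{1}{r_0}$ holds. For the contact structure, both sides extend the contact form from the mapping-torus complement over the glued-in solid torus by filling in a contact form on $S^1\times D^2$ whose characteristic foliation on the boundary has the prescribed slope. On the rational open book side, this is the explicit extension $f_0(r)\,d\theta + g_0(r)\,d\phi$ from Section \ref{contact structure on rob}, satisfying $f_0 g_0' - f_0' g_0 > 0$ together with the boundary values $f_0(1)=-p-Kq$, $g_0(1)=-q+pK$ at $r=1$ and the standard values $f_0=2-r^2,\ g_0=r^2$ near the core. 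On the admissible transverse surgery side, the same data arise from the contact cut procedure built into Definition \ref{transverse}. Both constructions solve the same contact-form extension problem with the same slope on the boundary torus and the same characteristic foliation; by the uniqueness of such an extension up to contactomorphism rel.\ boundary (which is the content of the BEM uniqueness statement for the contact structure supported by a rational open book), the two contact structures coincide.

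The main obstacle I foresee is the sign analysis that produces the coefficient $-\frac{p}{q}$ rather than $+\frac{p}{q}$. It hinges on carefully identifying the page framing on the binding with $-F$ in the standard contact model, and consequently tracking orientation conventions in the Thurston-Winkelnkemper form, in the gluing map $\psi$ of Section \ref{contact structure on rob}, and in the coordinate change used to identify a neighborhood of $L$ with $N_{r_0}$. Once these conventions are fixed, the remainder of the argument is a routine matching of two explicit contact-form extensions on the surgery solid torus.
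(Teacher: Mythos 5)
Your overall architecture (identify the topological surgery, check admissibility, match the contact structures via the uniqueness of the supported structure) is reasonable, and the last part is actually spelled out more than in the paper, which leaves the contact matching implicit. But the step that produces the coefficient $-\frac{p}{q}$ --- the whole point of the lemma --- rests on an incorrect mechanism. You assert that ``the page framing corresponds to $-F$'' and deduce the coefficient by negating the slope. Two framings of the same knot differ by an integral number of meridional twists, not by a sign, and in fact the page framing of the binding \emph{coincides} with the product framing $F$ of the standard neighborhood: the pages meet the binding solid torus in the annuli $\{\phi=\mathrm{const}\}$, so the page push-off is the product longitude (this is exactly why the transverse surgery coefficient can be read off in those coordinates at all). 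The true source of the coefficient is different: the curves $\lambda,\mu$ on $\partial\MT(\Sigma,\phi)$ used to prescribe the rational gluing are \emph{not} the longitude and meridian of $L$. When the honest binding torus is glued in, the solid-torus meridian $\{\cdot\}\times\partial D^2$ goes to $\lambda$ and the solid-torus longitude to $-\mu$, i.e.\ meridian and longitude are interchanged (with a sign). Hence the gluing curve $p\lambda+q\mu$ on $\partial\MT$ is the $(q,-p)$-curve on the boundary of the binding neighborhood, and with the BEM slope convention (the curve $a\lambda+b\mu$ has slope $b/a$) killing the $(q,-p)$-curve is precisely the $-\frac{p}{q}$ transverse surgery. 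This meridian--longitude interchange is the crux of the paper's proof; with page framing $=F$ and a plain ``negate the slope'' rule you do not get a justified $-\frac{p}{q}$, you get an unexplained sign inserted by hand.

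There is also a genuine error in your admissibility check: you say the bound $-\frac{p}{q}<-\frac{1}{r_0}$ is achieved ``by shrinking $r_0$,'' but shrinking $r_0$ sends $-\frac{1}{r_0}\to-\infty$ and makes the inequality \emph{fail}. What is needed is a sufficiently large standard neighborhood, $r_0>\frac{q}{p}$, equivalently a torus around the binding whose characteristic foliation slope is closer to $0$ than $-\frac{p}{q}$; such neighborhoods exist because near the binding of the Thurston--Winkelnkemper model the slopes of the characteristic foliations sweep out $(-\infty,0)$ (this is the direction the paper uses when it chooses the neighborhoods $N_i$ with slope in $(-1,0)$ in the proof of Lemma 4.4). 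Once these two points are repaired --- replace the framing-negation argument by the explicit change of basis between $\partial\MT$ and $\partial N(L)$, and fix the direction of the admissibility estimate --- your outline matches the paper's proof.
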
 

\begin{proof}
	Recall from section \ref{contact structure on rob} that we glued a solid torus $S^1 \times D^2$ to one of the boundary components of the mapping torus $\MT(\Sigma, \phi)$ by sending a meridian $\{\cdot\} \times \partial D^2$ to the $(p,q)$-curve (i.e. representing $p [l] + q [m]$ homology class) with slope $\frac{q}{p}$. In case of an honest open book, this attaching homeomorphism interchanges the meridian and longitude of the solid torus with that of the mapping torus. More precisely, $\{\cdot\} \times \partial D^2$ goes to $(1,0)$ and $S^1 \times \{\cdot\}$ goes to $(0,-1)$. Therefore, a $(p,q)$-curve on $\partial \MT(\Sigma, \phi)$ will be identified with a $(q,-p)$-curve on $\partial(S^1 \times D^2)$. 
	Now, there exists a neighborhood $N_L$ of the binding $L$ in $\OB(\Sigma, \phi)$ such that $N_L$ is contactomorphic to  $N_{r_0}$ as in Definition \ref{transverse}. So we can do a transverse $-\frac{p}{q}$ surgery along $L$. This amounts to attaching a disk along the $(q,-p)$-curve on $\partial N_{-\frac{p}{q}}$, which is same as attaching a solid torus to $\MT(\Sigma, \phi)$ along the $(p,q)$-curve on its boundary.       
\end{proof}	

\noindent Note that Lemma \ref{binding surgery} is also true for rational open books with multiple rotating boundary components. 
	
\begin{lemma}\label{irreducible filling}
Consider the contact structure $(M^3,\xi_0) = \ROB(\Sigma, R_0)$ associated to an irreducible marked data set $$ D = (n_+,0,; (c_1,n_1), (c_2,n_2), (c_3,n), [j_1,\cdots,j_k]),$$ where $[j_1,\cdots,j_k]$ is as mentioned in Theorem \ref{stein filling 1}. Then, $(M^3,\xi_0)$ is Stein fillable.
\end{lemma}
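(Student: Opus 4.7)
The plan is to reduce the statement to an application of Theorem~\ref{Stein filling 2}. The first observation is that the hypothesis $n_s = n$ for every marked index $s \in \{j_1,\ldots,j_k\}$ forces each marked orbit of $R_0$ to consist of a single fixed point (since the orbit size is $n/n_s = 1$). Consequently, removing an invariant open disk around each marked orbit produces a surface $\hat{\Sigma}$ whose boundary components are \emph{not} cyclically permuted by $\phi := R_0|_{\hat{\Sigma}}$. On each such boundary component $\phi$ acts as a rotation by $2\pi c_s^{-1}/n$, and the $+$ sign in $n_+$ together with $\gcd(c_s,n) = 1$ forces the representative $c_s^{-1}$ to lie strictly between $0$ and $n$. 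Setting $p_s = n$ and $q_s = c_s^{-1}$, we get the inequality $p_s > q_s > 0$ required by Theorem~\ref{Stein filling 2}.

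Next, I would factor the monodromy as $\phi = h \circ \prod_{s \in \{j_1,\ldots,j_k\}} \partial_{q_s/p_s}$, where $\partial_{q_s/p_s}$ is supported in a collar of the corresponding boundary component and $h \in \Mod(\hat{\Sigma}, \partial \hat{\Sigma})$ is identity near $\partial \hat{\Sigma}$. The decisive step is to verify that $h$ lies in $\mathrm{Dehn}^+(\hat{\Sigma}, \partial \hat{\Sigma})$. The argument I would pursue uses the polygon realization of Theorem~\ref{res:1}: the rotation $\theta_{R_0}$ cyclically permutes the $n$ congruent wedge-sectors of the polygon $\P$ emanating from its center, and the side-pairing $W(\P)$ glues these sectors into $\Sigma_{g_0}$. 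After excising $R_0$-invariant disks around each marked fixed point, one can isotope the resulting map so that the cyclic sector-permutation is concentrated in collars of the removed disks. In the collar it becomes precisely the rotations $\partial_{q_s/p_s}$, while the complement of the collar remains fixed. Thus $h$ is the identity class in $\Mod(\hat{\Sigma}, \partial \hat{\Sigma})$ and is vacuously a product of positive Dehn twists, so Theorem~\ref{Stein filling 2} delivers the desired Stein filling of $(M^3, \xi_0)$.

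The principal obstacle is making rigorous the claim that, after the side-pairing $W(\P)$, the rotation of the punctured polygon really is isotopic rel boundary to the pure boundary-rotation factor, i.e.\ that no residual Dehn twists appear. This requires tracking the combinatorics of $W(\P)$ from Theorem~\ref{res:1} under the cyclic sector-permutation, in particular verifying that interior orbits of size $n/n_1$ and $n/n_2$ (which persist as interior cone-type points of $\hat{\Sigma}$) can be cyclically interchanged by an isotopy with support disjoint from the boundary. A secondary subtlety is that in special cases where $n_1 = n$ or $n_2 = n$ additional fixed points are available for marking; each contributes another positive boundary rotation, but the same sector-collapse argument applies simultaneously to all marked fixed points, so the conclusion $h = \mathrm{id}$ is unchanged.
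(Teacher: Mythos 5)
Your reduction has a genuine gap at its decisive step: the claim that after excising the invariant disks the rotation is isotopic rel boundary to pure boundary rotations, i.e.\ that the relative class $h$ in the factorization $\phi = h \circ \prod_s \partial_{q_s/p_s}$ is the identity. This is false whenever $R_0$ is nontrivial. A map supported in collars of $\partial \hat{\Sigma}$ acts trivially on $H_1(\hat{\Sigma})$, whereas the restriction of a nontrivial periodic map acts nontrivially on $H_1$ (already on the closed surface, e.g.\ the hyperelliptic involution acts by $-I$; in general a finite-order map acting trivially on homology with suitable coefficients is trivial). Concretely, for $(3_+,0;(1,3),(1,3),(1,3),[3])$ the page is a one-holed torus and the order-$3$ rotation is freely isotopic to $(T_aT_b)^2$, which induces an order-$3$ element of $SL(2,\mathbb{Z})$ on $H_1$; no ``sector-collapse'' isotopy can make $h$ trivial. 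So as written you have not verified the hypothesis $h \in \mathrm{Dehn}^+(\hat{\Sigma},\partial\hat{\Sigma})$ of Theorem~\ref{Stein filling 2}, and the proposal does not close. (Your preliminary observations are fine: $n_s = n$ does force the marked orbits to be fixed points with non-permuted boundary circles, and the $+$ sign gives $p_s = n > q_s = c_s^{-1} > 0$.)

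The paper avoids exactly this issue by not trying to exhibit $h$ as a positive twist product. It notes that $h$ is freely isotopic to the periodic map $h\circ\partial_1\circ\partial_2\circ\partial_3$ with all fractional Dehn twist coefficients $c_i^{-1}/n > 0$, so the Colin--Honda result (Theorem~\ref{colinho1}) gives Stein fillability of the honest open book $\OB(\Sigma,h)$ directly; then Lemma~\ref{binding surgery} identifies $\ROB(\Sigma,R_0)$ with the result of $-n/c_i^{-1}$ admissible transverse surgeries on the binding components, and Theorem~\ref{BE} converts these into Legendrian surgeries, i.e.\ Stein $2$-handle attachments. If you want to keep your route through Theorem~\ref{Stein filling 2}, you would need an independent proof that the relative periodic-up-to-rotation monodromy $h$ lies in $\mathrm{Dehn}^+$, which is a substantially stronger statement than anything your polygon argument provides; otherwise, replace that step by the appeal to Theorem~\ref{colinho1} as the paper does.
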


\noindent Note that by Equation \ref{eqn:riemann_hurwitz} in Definition \ref{defn:data_set}, $\Sigma$ has genus $\frac{n-1}{2}$. Therefore, $n$ has to be odd. Suppose the contact $(M^3,\xi)$ admits an open book decomposition $\OB(S, h)$ with $h$ freely isotopic to a periodic monodromy. Let $r_i$ be the $\textrm{FDTC}$ of the $i^{th}$ boundary component of $S$ and $\psi$ be the periodic representative of $h$.  The proof of Lemma \ref{irreducible filling} will require the following result~\cite[Theorems 4.1-4.2]{CH}.

\begin{theorem}[Colin--Honda, \cite{CH}]\label{colinho1}
If all the $r_i$ are positive, then $(M, \xi)$ is a uniquely Stein fillable $S^1$-invariant contact structure which is transverse to the $S^1$-fibers.
\end{theorem}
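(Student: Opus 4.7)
The plan is to apply Theorem~\ref{colinho1} of Colin--Honda directly after verifying that the periodic monodromy $R_0$ has strictly positive fractional Dehn twist coefficients at every boundary component of the page.

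First I would identify the page and read off the rotation slopes. The hypothesis $n_s = n$ for every $s \in \{j_1,\ldots,j_k\}$ forces each marked orbit in $\Sigma_{g_0}$ to be a single fixed point of the closed-surface $\langle R_0\rangle$-action. Removing small invariant disks around these $k$ fixed points yields a page $\Sigma$ with exactly $k$ boundary components, and on the $i$-th component $R_0$ acts as a rigid rotation by $\frac{2\pi c_{j_i}^{-1}}{n}$; the $+$ subscript in $n_+$ pins this sense of rotation to be positive simultaneously on every boundary. Letting $h \in \Mod(\Sigma, \partial\Sigma)$ be the relative class obtained from $R_0$ by undoing these boundary rotations via the isotopy $\bar\beta$ recalled in Section~\ref{FDTC}, the Thurston representative of $h$ is precisely $R_0$, so by Definition~\ref{fracdehn} the FDTC at the $i$-th boundary is $r_i = c_{j_i}^{-1}/n$, lying in $(0,1)$ since $\gcd(c_{j_i}, n) = 1$ with $0 < c_{j_i}^{-1} < n$. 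In particular every $r_i$ is strictly positive.

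Second I would invoke Theorem~\ref{colinho1} to conclude. Colin--Honda's result asserts that any contact 3-manifold supported by an open book whose monodromy is freely isotopic to a periodic map having all FDTCs positive is uniquely Stein fillable, with the fillable contact structure being $S^1$-invariant and transverse to the Seifert fibers of the quotient structure. The main obstacle is that Theorem~\ref{colinho1} is literally stated for honest open books while $\xi_0$ is supported by the \emph{rational} open book $\ROB(\Sigma, R_0)$, so the closing step is to exhibit $(M^3, \xi_0)$ as the very same $S^1$-invariant contact manifold that Colin--Honda produce. This identification follows from the BEM construction reviewed in Section~\ref{contact structure on rob}: the contact form $\alpha_K = \lambda_{(t,x)} + K\,dt$ on $\MT(\Sigma, R_0)$ may be chosen invariant under the rotation of the mapping-torus $S^1$-coordinate (since $\lambda$ and $\phi^*\lambda$ can be taken $\langle R_0\rangle$-invariant near the boundary), and because each marked orbit is a \emph{fixed} circle of $R_0$ the $S^1$-action extends across each filled binding solid torus using the purely radial extension $f_0(r)\,d\theta + g_0(r)\,d\phi$ of Section~\ref{contact structure on rob}. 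Hence $(M^3, \xi_0)$ realizes an $S^1$-invariant contact structure on a Seifert-fibered 3-manifold transverse to the fibers, with fiber rotation invariants $r_i > 0$; by the uniqueness clause of Theorem~\ref{colinho1}, $\xi_0$ coincides with Colin--Honda's Stein-fillable $S^1$-invariant structure, and the asserted Stein filling follows.
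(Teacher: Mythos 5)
The statement you were asked to prove is Theorem~\ref{colinho1} itself, which the paper does not prove at all: it is quoted verbatim as an external result of Colin and Honda (\cite[Theorems 4.1--4.2]{CH}) and used as a black box. Your argument is circular. It opens with ``the plan is to apply Theorem~\ref{colinho1} of Colin--Honda directly'' and closes with ``by the uniqueness clause of Theorem~\ref{colinho1}, \ldots the asserted Stein filling follows'' --- that is, you establish the statement by invoking the statement. This cannot stand as a proof. What you have actually written is, in substance, the paper's proof of Lemma~\ref{irreducible filling} (the \emph{application} of Colin--Honda to the rational open book $\ROB(\Sigma,R_0)$ of an irreducible marked data set): reading off the FDTCs $c_{j_i}^{-1}/n>0$ from the marked data set, passing to the relative class $h$, and then feeding the positivity of the $r_i$ into Colin--Honda. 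That is a different statement from the one under review.

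A genuine proof of Theorem~\ref{colinho1} would require the actual content of \cite{CH}: the identification of the open book with periodic monodromy as a Seifert fibered space over the quotient orbifold, the classification of tight versus overtwisted contact structures on such spaces via convex surface theory, the construction of the $S^1$-invariant contact structure transverse to the fibers when all $r_i>0$, and the proof that this structure is Stein fillable and unique with these properties. None of that machinery appears in your proposal, and it cannot be replaced by the BEM extension of the contact form over the binding tori, which only shows that \emph{some} contact structure is supported by the open book, not that it is the unique Stein fillable $S^1$-invariant one. If your intent was to prove Lemma~\ref{irreducible filling}, say so explicitly and cite Colin--Honda as a known input; as a proof of the cited theorem itself, the argument has no content.
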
 

\noindent The next result that we need is due to Baldwin-Etnyre~\cite[Theorem 3.2]{BE}. 

\begin{theorem}[Baldwin--Etnyre, \cite{BE}] \label{BE}
Let $K$ be a transverse knot in some contact manifold. Suppose $N$ is a standard neighborhood of $K$ such that the characteristic foliation on $\partial N$ is linear with slope $a$, where $n < a < n+1$ for some integer $n$. Then, for any rational number $s < n$, admissible transverse $s$-surgery on $K$ can also be achieved by Legendrian surgery on some Legendrian link in $N$.
\end{theorem}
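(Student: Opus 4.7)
The plan is to realize the admissible transverse $s$-surgery on $K$ as a composition of contact $(-1)$-surgeries, i.e.\ Legendrian surgeries, on an explicit Legendrian chain sitting inside $N$, and then invoke Honda's classification of tight contact structures on solid tori to confirm that this agrees on the nose with the contact manifold produced by the contact cut.

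First I would translate admissible transverse $s$-surgery into convex-surface language. After a $C^\infty$-small perturbation, $\partial N$ becomes convex with two parallel dividing curves of slope $a \in (n,n+1)$. The admissible surgery removes $N$ and glues back a solid torus $V_s$ whose meridian hits $\partial N$ in a curve of slope $s$; by the contact cut construction, $V_s$ carries the unique minimally twisting tight contact structure with boundary slope $s$. Because $s<n<a$, this gluing is well-defined and the slope jumps across at least one integer, which is what will make room for a Legendrian chain.

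Next I would build the Legendrian link. Expand $s$ as a negative continued fraction $s = n-\frac{1}{k_1-\frac{1}{k_2-\cdots}}$ with each $k_i\le -2$. Inside $N$ peel off a nested family of convex tori $T_1\supset T_2\supset\cdots$ parallel to $\partial N$ whose dividing slopes strictly decrease, hitting the successive convergents of this expansion. On each $T_j$ take a Legendrian ruling curve $L_j$ of the slope dictated by $k_j$. By the Ding--Geiges--Stipsicz contact surgery calculus, contact $(-1)$-surgery on $L_1$ corresponds topologically to integer surgery on a Legendrian push-off of $K$ whose contact framing differs from the $K$-framing by $n$, so a single Legendrian surgery on $L_1$ already achieves slope $n-1$; the iterated surgery along $L_1\cup\cdots\cup L_m$ realizes the topological $s$-surgery on $K$ via the continued-fraction recipe.

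The remaining step, which is the main obstacle, is identifying the contact structure on the surgered piece with the $V_s$ produced by the contact cut. Both structures live on a solid torus with convex boundary of slope $s$, so by Honda's classification they are distinguished only by a discrete invariant (relative Euler class, or equivalently a sign tuple on the bypass layers). I would pin this invariant down by computing the sum of the rotation numbers of the $L_j$ and showing that the stabilization choices on each $L_j$ can be arranged so that the resulting sign tuple matches the one dictated by the minimally twisting model $V_s$; any discrepancy is absorbed by stabilizing $L_j$ in a prescribed direction before surgery. With both tight structures sharing boundary slope and relative Euler class, Honda's uniqueness theorem yields a contactomorphism rel boundary, which glues with the identity on $M\setminus N$ to give the desired contactomorphism between the Legendrian-surgered manifold and the admissible transverse $s$-surgery. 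The technically delicate part throughout is the bookkeeping of dividing slopes on the intermediate convex tori as one successively surgers, and matching this bookkeeping with the continued-fraction expansion of $s$.
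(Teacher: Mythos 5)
This statement is not proved in the paper at all: it is quoted verbatim as an external result (Theorem 3.2 of Baldwin--Etnyre, \cite{BE}) and used as a black box in the proofs of Lemma \ref{irreducible filling} and Theorem \ref{Stein filling 2}. So there is no in-paper argument to compare yours against; the relevant comparison is with the proof in \cite{BE} itself, and your sketch is a faithful reconstruction of that strategy: convert the admissible transverse surgery into convex-torus data inside the standard neighborhood, place Legendrian curves on a nested family of tori whose dividing (or characteristic-foliation) slopes realize the continued-fraction convergents of $s$, perform contact $(-1)$-surgeries along the resulting chain, and invoke Honda's classification to match the surgered solid torus with the contact-cut model. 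Two points in your sketch deserve more care. First, when you say ``any discrepancy is absorbed by stabilizing $L_j$ in a prescribed direction,'' note that the \emph{number} of stabilizations of each $L_j$ is forced by the topological surgery coefficient (stabilization changes the contact framing, hence the smooth outcome of a $(-1)$-surgery); only the signs are free, so you must check that the sign tuple demanded by the contact-cut structure is actually attainable --- this works because the contact cut on a standard neighborhood yields a universally tight solid torus, which in Honda's classification corresponds to the all-same-sign tuple, and that tuple is always realizable. Second, the hypothesis $s<n$ with $a\in(n,n+1)$ is exactly what guarantees that the nested tori of the required slopes exist \emph{inside} $N$ (the foliation slope must sweep past the relevant convergents before reaching $\partial N$); your phrase ``the slope jumps across at least one integer'' is the right idea but should be tied explicitly to this hypothesis, since without it the first Legendrian curve of the chain need not live in $N$.
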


\noindent We now prove Lemma \ref{irreducible filling}. 
\begin{proof}[Proof of Lemma \ref{irreducible filling}]
	
	Let us consider the marked data set $$D = (n_+,0,; (c_1,n), (c_2,n),(c_3,n), [1,2,3]).$$ We see that $\Sigma$ is a genus-$(\frac{n-1}{2})$ surface with three boundary components. Each of these three boundary components are invariant under $R_0$ and the $i$th boundary component rotates by an angle of $\frac{2\pi c_i^{-1}}{n}$ for $i =1,2,3$. In other words, $R_0$ is isotopic, relative to boundary, to a homeomorphism $h \circ \partial_1 \circ \partial_2 \circ \partial_3$, where $h$ is an element of $\Mod(\Sigma, \partial \Sigma)$ and $\partial_i$ represents the boundary rotation with $\textrm{FDTC} = \frac{c_i^{-1}}{n}$. It is clear that $h$ is freely isotopic to $h \circ \partial_1 \circ \partial_2 \circ \partial_3$. Since $R_0$ comes from a positive marked data set, $\frac{c_i^{-1}}{n} > 0$ for all $i$. Therefore, Theorem \ref{colinho1} implies that the contact open book $\OB(\Sigma,h)$ is uniquely Stein fillable. Let $(W^4, d\lambda)$ denote this Stein filling.   
	
\vspace{5mm}	
Now by Lemma \ref{integral to rational}, $\ROB(\Sigma, R_0) = \ROB(\Sigma, h \circ \partial_1 \circ \partial_2 \circ \partial_3)$ can be obtained from $\OB(\Sigma,h)$ by doing $-\frac{n}{c_i^{-1}}$-transverse surgery on the $i$th boundary component for $i = 1,2$ and $3$. Let $L_i$ be the $i$th binding component in $\OB(\Sigma,h)$ and let $N_i$ denote a standard contact neighborhood of $L_i$ such that the characteristic foliation on $\partial N_i$ has slope $a$ such that $-1 < a < 0$. Note that near the binding of an honest open book, the contact plane rotates so that the slope of the characteristic foliation goes from $0$ to $-\infty$. Therefore, one can always find such an $N_i$. Applying Theorem \ref{BE} for $n = -1$, we get that $\ROB(\Sigma, h \circ \partial_1 \circ \partial_2 \circ \partial_3)$ can be obtained from $\OB(\Sigma,h)$ by Legendrian surgery on some link in $N_1 \cup N_2 \cup N_3$. Each of these Legendrian surgeries along a knot in $(M,\xi)$ amounts to attaching a Stein $2$-handle to $(W^4,\lambda)$. Hence, $\ROB(\Sigma, R_0)$ is Stein fillable.
	
It is easy to see that the proof in the general case is exactly the same with fewer number of binding components to do surgery on. 
\end{proof}	

\noindent The final ingredient we need is the following result, which is a straightforward corollary of \cite[Theorem 1.3]{BEM}.

\begin{theorem}[Baldwin--Etnyre--van Horn-Morris, \cite{BEM}]\label{stein cobord}
	If $\OB(\Sigma,\phi_1)$ and $\OB(\Sigma,\phi_2)$ is Stein fillable, then $\OB(\Sigma, \phi_1 \circ \phi_2)$ is Stein fillable.
\end{theorem}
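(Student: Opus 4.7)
The plan is to derive Theorem~\ref{stein cobord} as a direct consequence of \cite[Theorem 1.3]{BEM}, whose main content is the construction of an explicit Stein cobordism that combines two open books with a common page. More precisely, given open books $\OB(\Sigma,\phi_1)$ and $\OB(\Sigma,\phi_2)$ on closed contact $3$-manifolds $(M_1,\xi_1)$ and $(M_2,\xi_2)$, their theorem produces a Stein cobordism $(X^4, J)$ whose concave (negative) end is $(M_1,\xi_1) \sqcup (M_2,\xi_2)$ and whose convex (positive) end is the contact manifold $(M_{12},\xi_{12})$ supported by $\OB(\Sigma,\phi_1 \circ \phi_2)$. This reduces the problem to stacking Stein pieces.

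First I would invoke the Stein fillability hypothesis to obtain Stein fillings $(W_1,J_1)$ of $(M_1,\xi_1)$ and $(W_2,J_2)$ of $(M_2,\xi_2)$. Their disjoint union $(W_1 \sqcup W_2, J_1 \sqcup J_2)$ is a Stein filling of the negative boundary of $X$. I would then glue $X$ on top of $W_1 \sqcup W_2$ along the matching contact boundary. Since attaching a Stein cobordism along its concave end to the convex boundary of a Stein filling yields a Stein manifold, the resulting compact $4$-manifold $(W_1 \sqcup W_2) \cup_{\partial} X$ is a Stein filling whose boundary is exactly $(M_{12},\xi_{12})$. This establishes Stein fillability of the contact structure supported by $\OB(\Sigma,\phi_1 \circ \phi_2)$.

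The only point that requires care is that the cobordism produced in \cite[Theorem 1.3]{BEM} is genuinely Stein (and not merely symplectic) and that the concave/convex gluing preserves the Stein structure. Both of these facts are built into the BEM construction, which proceeds by attaching Weinstein handles along Legendrian curves sitting in the complement of the binding; consequently the output cobordism carries a Stein structure compatible with its convex and concave boundaries. Given this, the proof is essentially a one-line application of \cite[Theorem 1.3]{BEM} together with the standard stacking principle for Stein fillings, so no genuine obstacle arises beyond correctly invoking the BEM cobordism.
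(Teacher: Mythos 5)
Your argument is correct and matches the paper's justification: the paper offers no independent proof, stating the result as a straightforward corollary of \cite[Theorem 1.3]{BEM}, which is exactly the Stein cobordism from $(M_1,\xi_1)\sqcup(M_2,\xi_2)$ to the contact manifold supported by $\OB(\Sigma,\phi_1\circ\phi_2)$ that you invoke. Your write-up simply makes explicit the standard stacking of the given Stein fillings onto the concave end of that cobordism, which is the intended derivation.
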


We are now ready to prove Theorem \ref{stein filling 1}.

\begin{proof}[Proof of Theorem \ref{stein filling 1}]
	
Here also we consider data sets with $3$-disk neighborhoods of cone points removed, i.e., with $[\hat{n},\hat{n},\hat{n}]$. The proof is similar for the remaining cases.

Let $\hat{D}_i = (n_+, 0; (c_{i1},n), (c_{i2},n), (c_{i3},n), [1,2,3])$ for $i = 1,2$ be two compatible irreducible marked data sets as in the hypothesis of Theorem \ref{stein filling 1}. Let $\ROB(\Sigma_{g_i}, R_i)$ be the rational open book associated to $\hat{D}_i$. Assume that $\hat{D}_1$ and $\hat{D}_2$ are $(2,3)$-compatible. Which means that $c_{12} + c_{23} \equiv 0 \pmod n$. According to our notation used in Example \ref{type 2 exmp 1}--\ref{type 2 exemp 2}, the resultant marked data set is represented by $([\hat{D}_1, \hat{D}_2], (2:1) \sim (3:2))$. 

As in Lemma \ref{irreducible filling}, we write the monodromy homeomorphism $R_i$ as a composition of an element of the relative mapping class group with fractional rotations near boundary. let $R_i = h_i \circ \partial_{i1} \circ \partial_{i2} \circ \partial_{i3}$ for $i = 1,2$. As described in Lemma \ref{irreducible filling}, both $\OB(\Sigma_{g_1},h_1)$ and $\OB(\Sigma_{g_2},h_2)$ are Stein fillable by Theorem \ref{colinho1}. 

Now recall condition $(2)$ in Theorem \ref{stein filling 1}: both $\partial (\Sigma_{g_1} \cup \Sigma_{g_2}) \cap \partial \Sigma_{g_1}$  and $\partial (\Sigma_{g_1} \cup \Sigma_{g_2}) \cap \partial \Sigma_{g_2}$ are non-empty. This implies that we can build $\Sigma_{g_1} \cup \Sigma_{g_2}$ by attaching $1$-handles to either $\Sigma_{g_1}$ or $\Sigma_{g_2}$. The reason behind this is the following. Let $C_b$ be a boundary component of $\Sigma_{g_1} \cup \Sigma_{g_2} \setminus \Sigma_{g_1}$. Let $C_0$ be the curve along which $\Sigma_{g_1}$ and $\Sigma_{g_2}$ are glued together, as shown in Figures~\ref{stein glue 1}.

\begin{figure}[h] 
	\centering
	\def\svgwidth{\textwidth}
	\scalebox{.85}{
\begingroup%
  \makeatletter%
  \providecommand\color[2][]{%
    \errmessage{(Inkscape) Color is used for the text in Inkscape, but the package 'color.sty' is not loaded}%
    \renewcommand\color[2][]{}%
  }%
  \providecommand\transparent[1]{%
    \errmessage{(Inkscape) Transparency is used (non-zero) for the text in Inkscape, but the package 'transparent.sty' is not loaded}%
    \renewcommand\transparent[1]{}%
  }%
  \providecommand\rotatebox[2]{#2}%
  \newcommand*\fsize{\dimexpr\f@size pt\relax}%
  \newcommand*\lineheight[1]{\fontsize{\fsize}{#1\fsize}\selectfont}%
  \ifx\svgwidth\undefined%
    \setlength{\unitlength}{435.94574364bp}%
    \ifx\svgscale\undefined%
      \relax%
    \else%
      \setlength{\unitlength}{\unitlength * \real{\svgscale}}%
    \fi%
  \else%
    \setlength{\unitlength}{\svgwidth}%
  \fi%
  \global\let\svgwidth\undefined%
  \global\let\svgscale\undefined%
  \makeatother%
  \begin{picture}(1,0.38107936)%
    \lineheight{1}%
    \setlength\tabcolsep{0pt}%
    \put(0,0){\includegraphics[width=\unitlength,page=1]{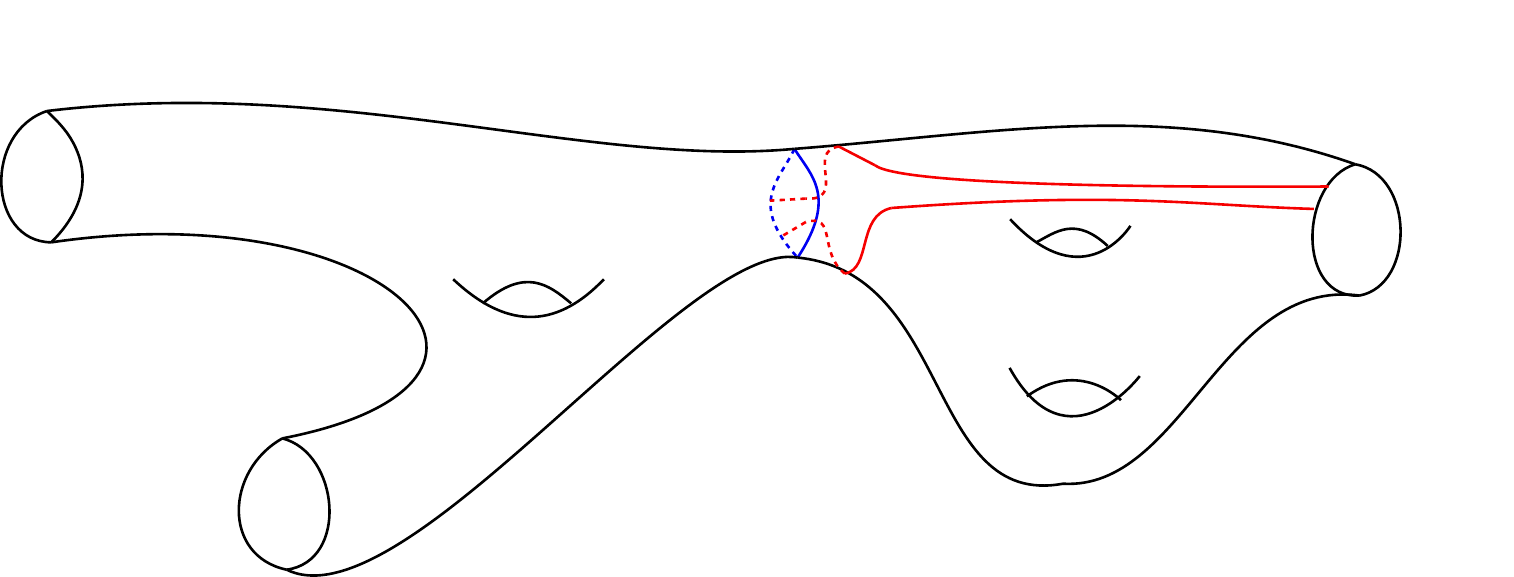}}%
    \put(0.94074613,0.22515207){\color[rgb]{0,0,0}\makebox(0,0)[lt]{\lineheight{1.25}\smash{\begin{tabular}[t]{l}$C_b$\end{tabular}}}}%
    \put(0,0){\includegraphics[width=\unitlength,page=2]{stg1.pdf}}%
    \put(0.47472443,0.36654736){\color[rgb]{0,0,0}\makebox(0,0)[lt]{\lineheight{1.25}\smash{\begin{tabular}[t]{l}$C_0$\end{tabular}}}}%
    \put(0.26790804,0.24753611){\color[rgb]{0,0,0}\makebox(0,0)[lt]{\lineheight{1.25}\smash{\begin{tabular}[t]{l}$\Sigma_1$\end{tabular}}}}%
    \put(0.72749304,0.18277918){\color[rgb]{0,0,0}\makebox(0,0)[lt]{\lineheight{1.25}\smash{\begin{tabular}[t]{l}$\Sigma_2$\end{tabular}}}}%
  \end{picture}%
\endgroup%
}
	\caption{Retracting the boundary component $C_b$ to the red curve. After retraction, the surface becomes homeomorphic to the boundary connected sum of $\Sigma_{g_1}$ with a $1$-handlebody.}
	\label{stein glue 1}
\end{figure}

\noindent We deformation retract $C_b$ to bring it near $C_0$ so that the resulting manifold is homeomorphic to the boundary connected sum of $\Sigma_{g_1}$ and a $1$-handlebody $H_1$. One can then extend $h_1$ to a homeomorphism $\hat{h}_1$ on $\Sigma_{g_1} \cup \Sigma_{g_2}$ by identity on $H_1$. Similarly we get $\hat{h}_2$ on $\Sigma_{g_1} \cup \Sigma_{g_2}$.

\noindent Since $\tilde{\Sigma} = \Sigma_{g_1}\cup \Sigma_{g_2}$ is obtained by attaching $1$ handles to either $\Sigma_{g_1}$ or $\Sigma_{g_2}$, $\OB(\tilde{\Sigma}, \tilde{h}_i)$ is Stein fillable for $i = 1,2$. Therefore, by Theorem \ref{stein cobord}, $\OB(\tilde{\Sigma}, \tilde{h}_1 \circ \tilde{h}_2)$ is Stein fillable.

 We now recall from Example \ref{type 2 exemp 2} the description of the resulting homeomorphism after compatible gluing of two marked data sets. According to that description the resulting homeomorphism on $\tilde{\Sigma}$ is given by $h_1 \cup T_{c_0} \cup h_2 \cup \bar{\partial}_1 \cup \bar{\partial}_2$, which is the same as the homeomorphism $\tilde{h}_1 \circ T_{C_0} \circ \tilde{h}_2 \circ \bar{\partial}_1 \circ \bar{\partial}_2$. Here, $\bar{\partial}_i$ denotes the union of rotations on the boundary components of $\Sigma_i$, except the ones that are used in the compatible gluing. In particular, the contact manifold associated to $([\hat{D}_1, \hat{D}_2], (c_{12},n)_1 \sim (c_{12},n)_2)$ is given by $\ROB(\tilde{\Sigma}, \tilde{h}_1 \circ T_{C_0} \circ \tilde{h}_2 \circ \bar{\partial}_1 \circ \bar{\partial}_2)$, as shown in Figure \ref{deco} below.
 
\begin{figure}[htbp] 
 	\centering
\def\svgwidth{\textwidth}
\begingroup%
  \makeatletter%
  \providecommand\color[2][]{%
    \errmessage{(Inkscape) Color is used for the text in Inkscape, but the package 'color.sty' is not loaded}%
    \renewcommand\color[2][]{}%
  }%
  \providecommand\transparent[1]{%
    \errmessage{(Inkscape) Transparency is used (non-zero) for the text in Inkscape, but the package 'transparent.sty' is not loaded}%
    \renewcommand\transparent[1]{}%
  }%
  \providecommand\rotatebox[2]{#2}%
  \newcommand*\fsize{\dimexpr\f@size pt\relax}%
  \newcommand*\lineheight[1]{\fontsize{\fsize}{#1\fsize}\selectfont}%
  \ifx\svgwidth\undefined%
    \setlength{\unitlength}{510.41573398bp}%
    \ifx\svgscale\undefined%
      \relax%
    \else%
      \setlength{\unitlength}{\unitlength * \real{\svgscale}}%
    \fi%
  \else%
    \setlength{\unitlength}{\svgwidth}%
  \fi%
  \global\let\svgwidth\undefined%
  \global\let\svgscale\undefined%
  \makeatother%
  \begin{picture}(1,0.26812873)%
    \lineheight{1}%
    \setlength\tabcolsep{0pt}%
    \put(0,0){\includegraphics[width=\unitlength,page=1]{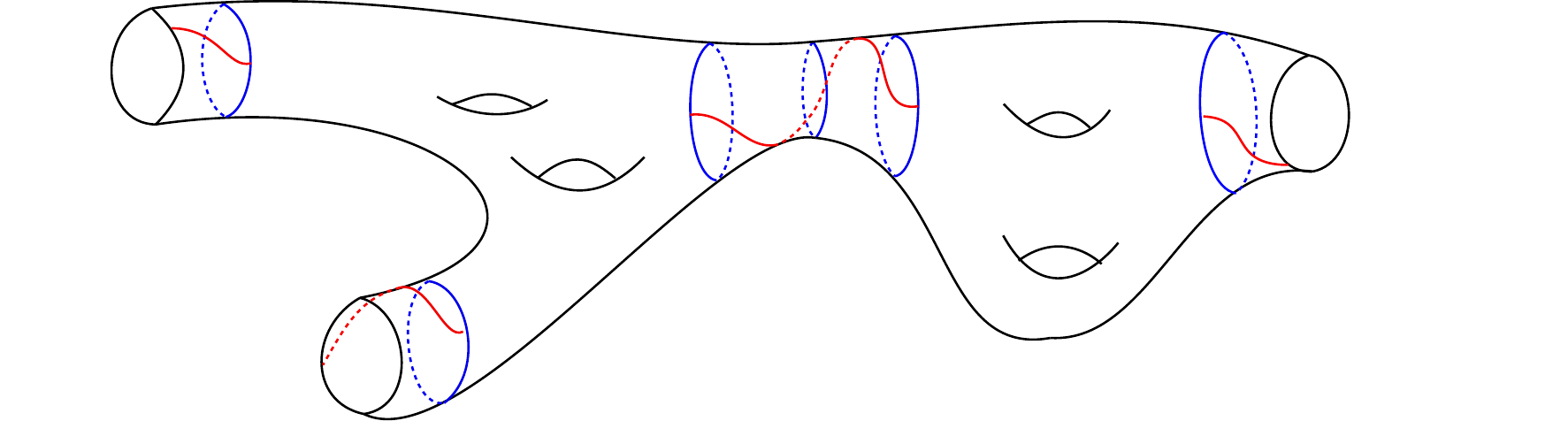}}%
    \put(-0.00065065,0.22420236){\color[rgb]{0,0,0}\makebox(0,0)[lt]{\lineheight{1.25}\smash{\begin{tabular}[t]{l}$\frac{2\pi c_{11}^{-1}}{n}$\end{tabular}}}}%
    \put(0.87811879,0.1906716){\color[rgb]{0,0,0}\makebox(0,0)[lt]{\lineheight{1.25}\smash{\begin{tabular}[t]{l}$\frac{2\pi c_{21}^{-1}}{n}$\end{tabular}}}}%
    \put(0.13486532,0.02050898){\color[rgb]{0,0,0}\makebox(0,0)[lt]{\lineheight{1.25}\smash{\begin{tabular}[t]{l}$\frac{2\pi c_{13}^{-1}}{n}$\end{tabular}}}}%
    \put(0.34867886,0.17751548){\color[rgb]{0,0,0}\makebox(0,0)[lt]{\lineheight{1.25}\smash{\begin{tabular}[t]{l}$h_1$\end{tabular}}}}%
    \put(0.66042833,0.15429976){\color[rgb]{0,0,0}\makebox(0,0)[lt]{\lineheight{1.25}\smash{\begin{tabular}[t]{l}$h_2$\end{tabular}}}}%
    \put(0.49064897,0.11639005){\color[rgb]{0,0,0}\makebox(0,0)[lt]{\lineheight{1.25}\smash{\begin{tabular}[t]{l}$T_{C_0}$\end{tabular}}}}%
    \put(0,0){\includegraphics[width=\unitlength,page=2]{deco.pdf}}%
  \end{picture}%
\endgroup%

 	\caption{Description of the resulting monodromy after a $(1,2)$-compatible gluing.}
 	\label{deco}
\end{figure} 

Note that $\OB(\tilde{\Sigma}, \tilde{h}_1 \circ T_{C_0} \circ \tilde{h}_2)$ can be obtained from  $\OB(\tilde{\Sigma}, \tilde{h}_1 \circ \tilde{h}_2)$ by attaching a Stein $2$-handle along $C_0$ in $\tilde{\Sigma}$. Therefore,  $\OB(\tilde{\Sigma}, \tilde{h}_1 \circ T_{C_0} \circ \tilde{h}_2)$ is Stein fillable. We can now use Lemma \ref{binding surgery} and Lemma \ref{irreducible filling} to conclude that $\ROB(\tilde{\Sigma}, \tilde{h}_1 \circ T_{C_0} \circ \tilde{h}_2 \circ \bar{\partial}_1 \circ \bar{\partial}_2)$ is Stein fillable.

\end{proof}	

As an application of Theorem \ref{stein filling 1},
we discuss two examples of marked data sets.

\begin{exmp} \label{stein fillable data set 1}

Consider the following marked data set.

 $$\hat{D} = (5_+,0;(3,5),(3,5),(1,5),(2,5), [1,2,3]).$$ 
 
\noindent Note that $\hat{D}$ can be realized by two $(1,2)$-compatible irreducible Type $1$ marked data sets : $\hat{D}_1 = (5_+,0,(3,5),(1,5),(3,5), [1,2,3])$ and $\hat{D}_2 = (5_+,0,$\\$(1,5),(2,5),(1,5), [1,3]).$ This example is described in Figure \ref{deco} for the values : $n = 5, c^{-1}_{11} = 2, c_{13}^{-1} = 2$ and $c_{21}^{-1} = 1$. The resulting surface is of genus $4$ with $2$ genera coming from each of its irreducible components. Theorem \ref{stein filling 1} then says that the contact structure associated to $\hat{D}$ is Stein fillable.
\end{exmp}

\begin{exmp} \label{stein fillable data set 2}
Consider the following marked data set. 
$$\hat{D} = (6_+,0;(1,2),(1,3),(1,3),(5,6),[4]).$$ 
\noindent This $\hat{D}$ can be realized by two $(3,3)$-compatible irreducible marked data sets: $\hat{D}_1 = (6_+,0,(1,2),(1,3),(1,6),[3])$ and $\hat{D}_2 = (6_+,0,(1,3),(5,6),(5,6),[2,3])$. Therefore, by Theorem \ref{stein filling 1}, the contact structure associated to $\hat{D}$ is Stein fillable. 	
\end{exmp}

\subsection{The case of self compatible gluing in marked data sets} The total surface after self compatible gluing within a marked data set can be seen in two steps. First one attaches a $1$-handle between the two compatible boundary components. Then one attaches a disk along the resulting connected boundary component. This is known as \emph{capping off} a boundary component. In the proof of Theorem \ref{stein filling 1}, we saw how compatible gluing between distinct surfaces preserves Stein fillability. The approach taken there breaks down for self-compatible gluing because the $1$-handles are attached within the same connected component. Moreover, the change in contact structure due to capping off a boundary in the page of a contact open book is not very clear. Note that Baldwin and Etnyre \cite{BE} have proved an interesting result that shows that capping off operation on certain universally tight contact structures may lead to overtwisted contact structures.   

\subsection{Exapmle of data set giving overtwisted contact structure} Consider the marked data set $D_0 = (6_-, 0; (1,2), (2,3), (5,6), [3])$. It represents an order $6$ homeomorphism $\phi_0$ on a genus $1$ surface $\Sigma_0$ with one boundary component. There is a $\frac{-2\pi}{6}$-rotation on the boundary. In terms of Lemma \ref{binding surgery}, this amounts to a $+6$ transverse surgery along the binding. In section $5$ of \cite{BEM}, the notion of \emph{integral resolution} was defined to go from a supporting rational open book of a contact structure to a supporting integral open book. In our case, the page $\Sigma_1$ of this supporting integral open book is obtained by gluing a $6$-punctured disk to $\Sigma_0$ along its boundary and the monodromy $\phi_1$ is given by composing the extension of $\phi_0$ to $\Sigma_1$ with a negative Dehn twist about a curve parallel to each boundary component in the punctured disk, as described in Figure \ref{Fig resol}. It is the easy to see that the monodromy of the integral open book is \textit{left veering}. Therefore, the supported contact structure on $OB(\Sigma_1,\phi_1)$ is overtwisted. 

\begin{figure}[htbp]
	\centering
	\def\svgwidth{\textwidth}
	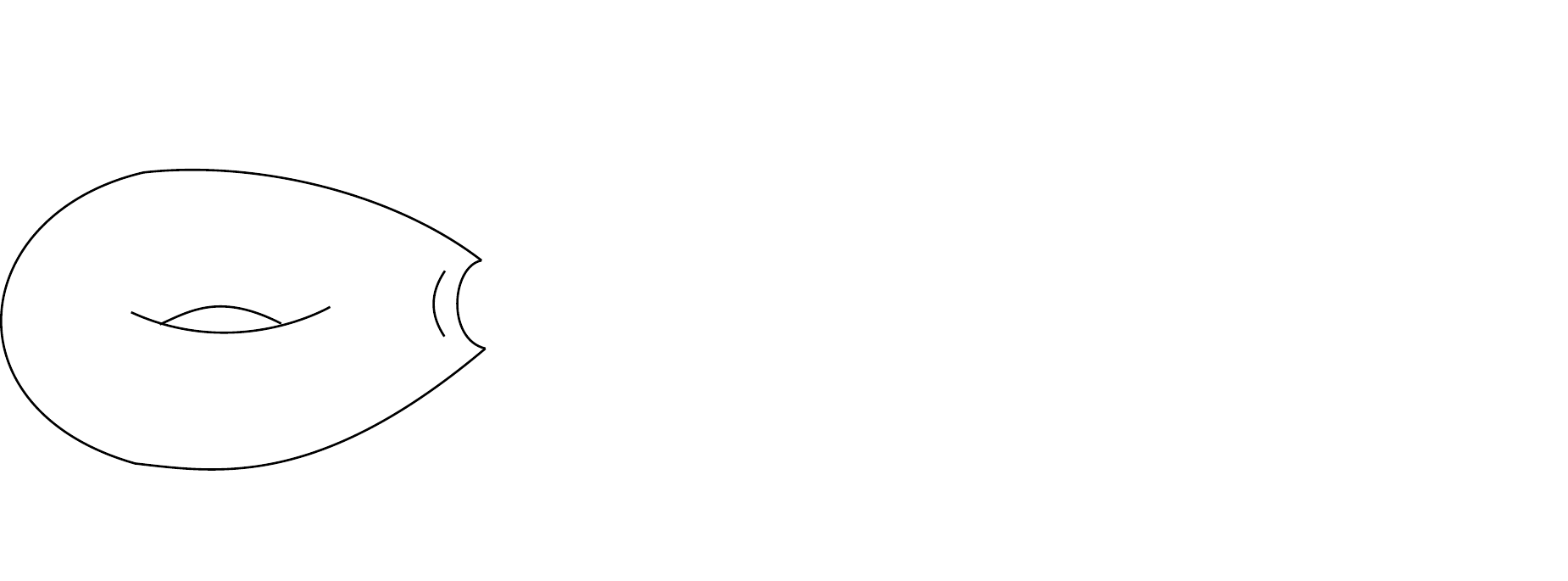
	\caption{Integral resolution of $ROB(\Sigma_0,\phi_0)$ to $OB(\Sigma_1,\phi_1)$.}
	\label{Fig resol}
\end{figure}

\section{Explicit symplectic fillings of rational open books}

\subsection{Mori's construction of symplectic filling} Mori \cite{Mo} constructed explicit strong symplectic filling of open books whose monodromy is composition of positive Dehn twists along disjoint curves. The part of his construction that we are interested in is the filling near binding of an open book. In particular, we will first look at the filling of an open book with identity monodromy. Note that $\partial(\Sigma \times D^2) = \OB(\Sigma,id)$. Let us consider the symplectic form $d\alpha \oplus 2s ds \wedge
d\phi$ on $\Sigma \times D^2$, where $d\alpha$ is an exact symplectic form on $\Sigma$ and $(s,\phi)$ are radial co-ordinates on $D^2$. We then attach a region, diffeomorphic to $S^1 \times D^3$, to $\Sigma \times D^2$ and extends the symplectic structure to all of $S^1 \times D^3$, so that the resulting manifold has a boundary contactomorphic to $\OB(\Sigma,id)$. Below we describe the procedure in more detail.

We can consider $\Sigma \times S^1$ sitting inside $(\Sigma \times \partial D^2 \times (0,1], (x, \phi, s)) \subset \Sigma \times D^2$. This induces a symplectic structure $\omega_0 = d(\alpha_K + s^2 d\phi)$ on $\Sigma \times \partial D^2 \times (0,1]$. First, we embed $\{\theta\} \times D^2 \subset S^1 \times D^2$ into $\mathbb{R}^3$ by the map $(r,\phi) \mapsto (h_1(r) cos\phi, h_1(r) sin\phi, h_0(r))$. Let $w = x + i y$ and $z = h_0(r)$. Here $h_0, h_1$ are smooth increasing functions defined on $[0,1]$ such that near $r = 1$, $h_0(r) = r -\frac{1}{2}$ and $h_1(r) = 1$, and near $r = 0$, $h_0(r) = \frac{r^2}{4}$ and $h_1(r) = r$. Thus, any point on the region $R_0 = \{(w,z) | h_0 \circ h^{-1}_1(|w|) \leq z \leq \frac{1}{2}\}$ can be represented by $z = h_0(r)$, $s h_1(r)$ and $arg(w) = \phi$, where $s \in [0,1]$ is determined by each point but $(0,0)$. By description, $R_0$ is diffeomorphic to $int(D^3)$. See Figure \ref{R0} below for a description of this embedding.

\begin{figure}[htbp]
	\centering
   	\def\svgwidth{\textwidth}
 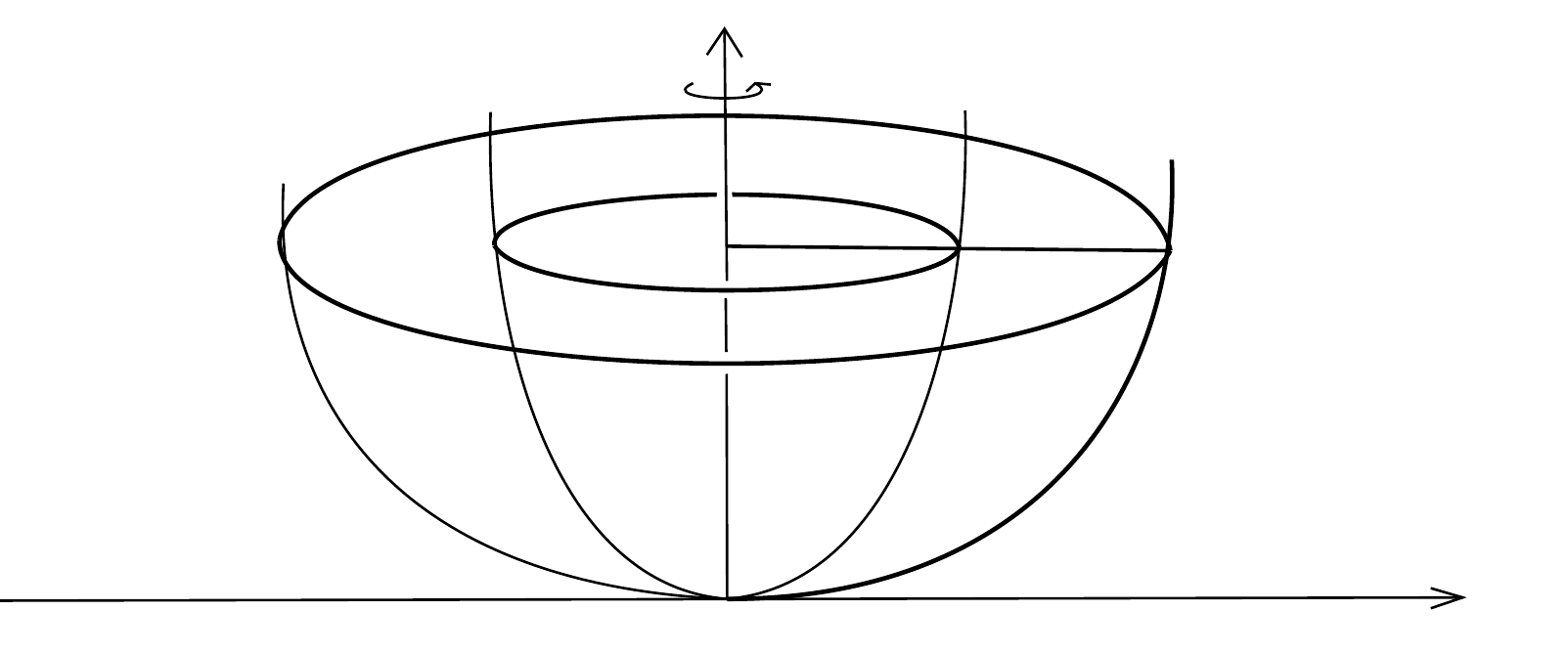
	\caption{Embedding of $\{\theta\} \times D^2$ in $R_0$}
	\label{R0}
\end{figure}

Using the gluing map between the boundary of a tubular neighborhood of binding and the boundary of the mapping torus, we can pull back the symplectic structure $\omega_0 = d(\alpha_K + s^2 d\phi)$ on $\Sigma \times \partial D^2 \times (0,1]$ to $S^1 \times N(\partial D^2) \times (0,1]$. One can then extend this symplectic structure by using the form $\omega = d(f(r)d\theta + s^2 g(r)d\phi)$, where $f$ and $g$ are real valued functions that interpolates between the contact structures on the mapping torus and on the solid torus neighborhood of the binding.

\subsection{Modification of Mori's construction for rational open books} Following Mori's construction in \cite{Mo}, our plan is to pullback the symplectic form $\omega_0$ 
over $S^1 \times N(\partial D^2) \times [1-\epsilon,1] \subset S^1 \times D^3$ via some gluing map between $S^1 \times N(\partial D^2) \times (0,1]$ and $\partial \Sigma \times (1-\epsilon,1] \times S^1 \times (0,1] \subset F \times D^2$ and then extend it to all of $S^1 \times R_0$ to produce a symplectic filling of $\ROB(\Sigma,\partial_{\frac{q}{p}})$.

We define the gluing map $\Psi$ as follows.

\begin{alignat*}{2}
\Psi: S^1 \times N(\partial D^2) \times (0,1] &\longrightarrow&  \partial \Sigma \times (1-\epsilon, 1] \times  S^1 \times (0,1] \\
(\theta,r,\phi,s) &\longmapsto& \   \ (p\theta + q\phi, -r, -q\theta + p \phi, s) 
\end{alignat*}

\noindent Then $\omega = \Psi^*\omega = d((-rp-Kq-qs^2)d\theta + (-r q + p K + p s^2) d\phi$ and $\omega \wedge \omega = 2s (p^2 + q^2) d\theta \wedge dr \wedge ds \wedge d\phi > 0$ for all $s \in (0,1]$. We want to extend this symplectic form to a $2$-form $\Omega = d(f(r,s) d\theta + g(r,s) d\phi)$, where $f(r,s) = f_0(r) - qs^2$ and $g(r,s) = g_0(r) + ps^2$, such that $\Omega \wedge \Omega > 0$ on $S^1 \times D^3$. 

A simple computation shows that $\Omega \wedge \Omega = -2s [p f'_0(r) + q g'_0(r)] d\theta \wedge dr \wedge ds \wedge d\phi)$. So, together with the contact condition, we want $f_0(r), g_0(r), p$ and $q$ satisfying the following conditions.

\begin{enumerate}
	\item Contact condition: $f_0(r)g'_0(r) - f'_0(r)g_0(r) > 0$ for all $r \in [0,1]$.
	\item Symplectic condition: $p f'_0(r) + q g'_0(r) < 0$ for all $r \in [0,1]$.
	\item  Near $r = 0$, $f_0(r) = 2H-r^2$ and $g_0(r) = r^2$. Here $H$ is a positive number. 
\end{enumerate}

\noindent Note that $f_0(1) = -p - q K, g_0(1) = -q + p K$ and $f'_0(r) = - 2r , g'_0(r) = 2r $ near $r = 0$. Thus, the symplectic condition implies that $-2r (p-q) < 0 \Longrightarrow p > q$.

We will investigate the above condition by cases.

\begin{enumerate}
	\item [Case 1] {$(p > 0, q < 0)$}: In this case we want to connect $(f_0(1),g_0(1)) = (-p - qK, -q + p K)$ and $(2H,0)$ so that conditions $(2)$ and $(3)$ are satisfied. By choosing $K$ large enough, we can always make sure that $(f_0(1),g_0(1))$ lies in the first quadrant. Moreover, we choose $H > f_0(1)$. Given this, we can always connect $(f_0(1),g_0(1))$ and $(2H,0)$ via $(f_0(r), g_0(r)$ such that $f'_0(r) < 0$ and $g'_0(r) > 0$ for all $r \in [0,1]$ such that conditions $(1)$ and $(3)$ are satisfied. However, note that such $f_0(r)$ and $g_0(r)$ may not satisfy condition $(2)$. For example, if $p = a$ and $q = -b$, where $a,b > 0$, then condition $(2)$ implies that $\frac{dg_0}{df_0}(r) > \frac{a}{b} > 0$ for all $r \in [0,1]$, which is not possible, as shown in Figure \ref{extension1}.
	
	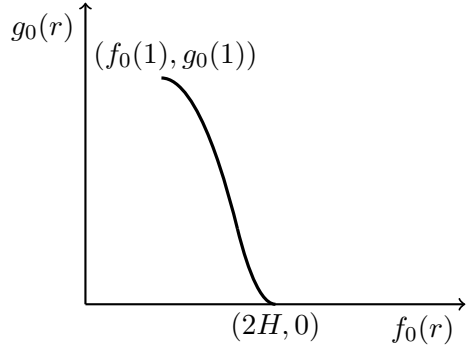
\begin{figure}[H]
		\begin{tikzpicture}
		\draw[thick,->] (0,0) -- (5,0) node[anchor=north east] {$f_0(r)$} ;
		\draw[thick,->] (0,0) -- (0,4)node[anchor=north east] {$g_0(r)$}; 
		\draw[very thick] (1,3) parabola  (2,1);
		\draw[very thick](2.5,0) parabola (2,1);
		\draw (2.5cm,1pt) node[anchor=north] {$(2H,0)$};
		\draw (2.4,3.3) node[anchor=east] {$(f_0(1),g_0(1))$};
		\end{tikzpicture}
		\caption{Extending the symplectic structure for Case $1$.}	
		\label{extension1}
	\end{figure} 
	
	\item[Case $2$]{$(p > 0, q > 0)$}: Taking $K$ large enough we can assume $f_0(1) < 0$ and $g_0(1) > 0$. Moreover, $(f'_0(1), g'_0(1)) = (- p, -q)$. Note that now $p f'_0(r) + q g'_0(r) < 0$ for all $r \in [0,1]$. Taking $H$ large enough we can connect $(f_0(1),g_0(1))$ and $(2H,0)$ satisfying conditions $(1)$ to $(3)$, as shown in Figure \ref{extension2}.
	
	\begin{figure}[H]
		\begin{tikzpicture}
		
		\draw[thick,->] (-2,0) -- (6,0) node[anchor=north east] {$f_0(r)$} ;
		\draw[thick,->] (0,0) -- (0,4)node[anchor=north east] {$g_0(r)$}; 
		\draw[thick,->] (4,0) -- (3.5,0.5);
		\draw[very thick] (0,1.5) parabola  (4,0);
		\draw[very thick] (0,1.5) parabola (-1,1);
		\draw (4cm,1pt) node[anchor=north] {$(2H,0)$};
		\draw (-0.2,0.7) node[anchor=east] {$(f_0(1),g_0(1))$};
		\end{tikzpicture}
		\caption{Extending the symplectic structure for Case $2$.}
		\label{extension2}	
	\end{figure}
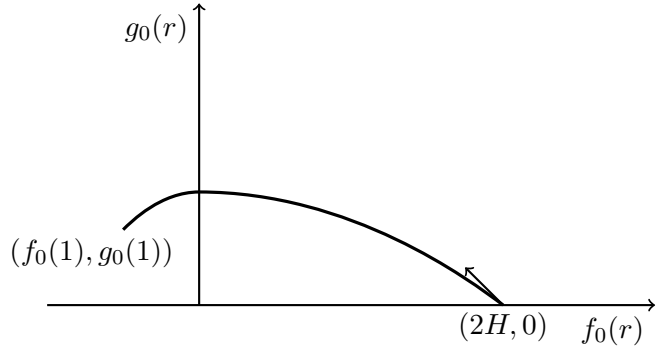 

	\item [Case 3] {$q < p < 0$}: For $K$ large enough, $f_0(1) > 0, g_0(1) < 0$ $f'_0(1) > 0$,$g'_0(1) > 0$. Then, any curve joining $(2H,0)$ and $(f_0(r), g_0(r))$ that satisfies conditions $(1)$ and $(3)$, must have a point $(f_0(r_0),g_0(r_0))$ such that both $f'_0(r_0)$ and $g'_0(r_0)$ are negative (see Figure \ref{extension3}). Thus, $p f'_0(r_0) + q g'_0(r_0) > 0$ and condition $(2)$ is violated. So, here we can not extend the symplectic form to the filling.

	\begin{figure}[H]
		\begin{tikzpicture}
		\draw[thick,->] (-3,0) -- (5,0) node[anchor=north east] {$f_0(r)$} ;
		\draw[thick,->] (0,-3) -- (0,4)node[anchor=north east] {$g_0(r)$}; 
		\draw (2.5cm,1pt) node[anchor=north] {$(2H,0)$};
		\draw[very thick, ->] (2.5,0) arc (0:320:2.5);
		\draw (3,-1.4) node[anchor=east] {$(f_0(1),g_0(1))$};
		\end{tikzpicture}
		\caption{Diagram for Case $3$.}	
		\label{extension3}
	\end{figure}
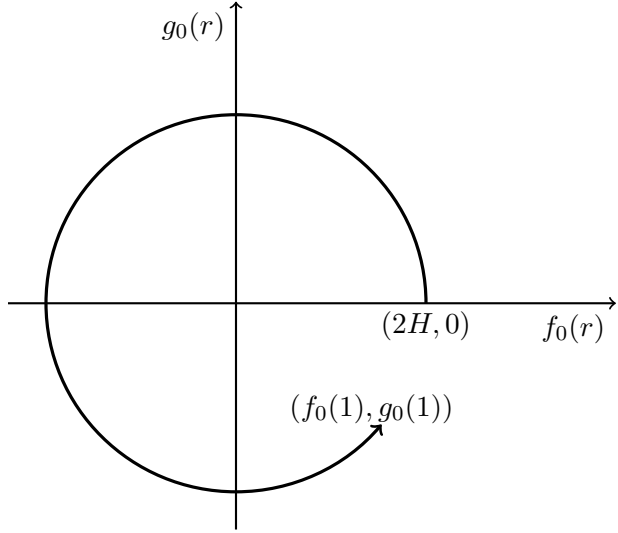 
	
\end{enumerate}
\noindent Thus, we have proved the following.

\begin{lemma} \label{rational filling}
	
	$\ROB(\Sigma,\partial_{\frac{q}{p}})$ is symplectically fillable for $p > q > 0.$
	
\end{lemma}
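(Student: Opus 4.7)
The plan is to follow the modified Mori construction set up just above the lemma. Starting from the symplectic form $\omega_0 = d(\alpha_K + s^2\,d\phi)$ on $\Sigma \times \partial D^2 \times (0,1]$, I would pull it back via the gluing map $\Psi$ to obtain a symplectic form on $S^1 \times N(\partial D^2) \times (0,1]$, then look for an extension to all of $S^1 \times R_0 \cong S^1 \times D^3$ of the form $\Omega = d\bigl(f(r,s)\,d\theta + g(r,s)\,d\phi\bigr)$ with $f(r,s) = f_0(r) - q s^2$ and $g(r,s) = g_0(r) + p s^2$. The matching condition at $r = 1$ forces $f_0(1) = -p - qK$ and $g_0(1) = -q + pK$, while near $r = 0$ the extension to the core $S^1 \times \{0\}$ forces $f_0(r) = 2H - r^2$ and $g_0(r) = r^2$ for some constant $H > 0$.

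I would then reduce the proof to producing a smooth interpolation $(f_0(r), g_0(r))$ between these two prescribed germs satisfying the two pointwise inequalities
\begin{equation*}
f_0(r) g_0'(r) - f_0'(r) g_0(r) > 0, \qquad p f_0'(r) + q g_0'(r) < 0, \qquad r \in [0,1].
\end{equation*}
The first inequality ensures $\Omega$ restricts to a contact form near the boundary (and is the standard Thurston--Winkelnkemper condition on the solid-torus piece), while the second is exactly what makes $\Omega \wedge \Omega > 0$ after the $s^2$-perturbation; a direct computation shows $\Omega \wedge \Omega = -2s\bigl(p f_0'(r) + q g_0'(r)\bigr) d\theta \wedge dr \wedge ds \wedge d\phi$, so positivity of the volume form is equivalent to the second displayed inequality.

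With $p > q > 0$ fixed, I would choose $K$ large enough that $f_0(1) = -p - qK < 0$ and $g_0(1) = -q + pK > 0$, placing the endpoint in the second quadrant, while $(2H, 0)$ lies on the positive $f$-axis. The tangent at $r = 1$ is $(-p, -q)$, which satisfies $p(-p) + q(-q) = -(p^2 + q^2) < 0$, and near $r = 0$ one has $p f_0' + q g_0' = 2r(q - p) < 0$ since $p > q$. So the symplectic condition holds automatically at both endpoints with the prescribed germs. It remains to interpolate $(f_0, g_0)$ between these endpoints by a smooth curve in the $(f, g)$-plane such that throughout the curve the tangent vector $(f_0', g_0')$ lies in the open half-plane $\{p u + q v < 0\}$, and the position vector and tangent vector have positive cross product (contact condition). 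Since the half-plane $\{p u + q v < 0\}$ contains the direction from $(f_0(1), g_0(1))$ towards $(2H, 0)$ once $H$ is chosen large enough (the displacement $(2H - f_0(1),\, -g_0(1))$ pairs against $(p,q)$ to give $p(2H + p + qK) - q(pK - q) = 2pH + p^2 + q^2 > 0$, which contradicts\ldots wait — I want $p\Delta f + q \Delta g < 0$; since $\Delta f > 0$ and $\Delta g < 0$, this is $p(2H - f_0(1)) + q(-g_0(1))$, and here I need to pick $K$ carefully so the sign works), this is a convex-geometry construction in the $(f,g)$-plane.

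The main obstacle, as signalled by the three-case discussion preceding the lemma, is precisely that the symplectic condition $p f_0' + q g_0' < 0$ restricts the direction of travel in the $(f,g)$-plane to an open half-plane, and this half-plane must simultaneously accommodate the fixed endpoint tangents, the convexity/contact condition, and the requirement that the curve ends up at $(2H, 0)$. The case $p > q > 0$ is the only sign combination for which all these constraints are jointly realisable; the hard part of the argument is therefore the explicit construction (or an existence argument via convex interpolation) of $f_0, g_0$ that threads this half-plane all the way from $r = 1$ to $r = 0$, and verifying smoothness of the match with both prescribed germs.
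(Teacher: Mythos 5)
Your setup is the same as the paper's (the modified Mori construction): pull back $\omega_0$ via $\Psi$, look for an extension $\Omega = d(f\,d\theta + g\,d\phi)$ with $f = f_0(r) - qs^2$, $g = g_0(r) + ps^2$, and reduce the lemma to finding $(f_0,g_0)$ on $[0,1]$ satisfying the contact condition $f_0g_0' - f_0'g_0 > 0$, the symplectic condition $pf_0' + qg_0' < 0$, the germ $(2H - r^2,\, r^2)$ at $r=0$, and the boundary data $f_0(1) = -p - qK$, $g_0(1) = -q + pK$ with tangent $(-p,-q)$. But exactly where the lemma has content --- producing the interpolating curve --- your argument stalls on a sign confusion that you do not resolve. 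The quantity you computed, $p\,(2H - f_0(1)) + q\,(-g_0(1)) = 2pH + p^2 + q^2 > 0$, is the pairing of $(p,q)$ with the displacement traversed in the direction of \emph{decreasing} $r$. The pointwise condition $pf_0'(r) + qg_0'(r) < 0$ is a derivative in $r$, and the curve $r \mapsto (f_0(r), g_0(r))$ runs from $(2H,0)$ at the core $r=0$ to $(f_0(1),g_0(1))$ at the gluing region $r=1$; the necessary integrated condition is therefore $p\,(f_0(1) - 2H) + q\,g_0(1) = -(p^2 + q^2 + 2pH) < 0$, which holds automatically. Note in particular that $K$ cancels in your computation, so your proposed remedy of ``picking $K$ carefully'' cannot work --- and, once the orientation is corrected, is not needed.

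With the orientation fixed there is no obstruction, and the construction closes exactly as in the paper's Case 2: choose $K$ with $pK > q$ so that $(f_0(1), g_0(1)) = (-p - qK,\, pK - q)$ lies in the second quadrant, take $H$ large, and run a curve from $(2H,0)$ that winds counterclockwise about the origin (this is the contact condition) while keeping its $r$-tangent in the open half-plane $\{pu + qv < 0\}$. This is possible precisely because $p > q > 0$: the forced tangent directions at both ends lie in that half-plane (at $r=0$ the germ gives direction $(-1,1)$ with $-p + q < 0$, at $r=1$ the direction is $(-p,-q)$), and for $p>q$ the half-plane contains every direction with $f_0' < 0$ and $g_0'/|f_0'| < p/q$, so one may let $f_0$ decrease monotonically from $2H$ to $-p - qK$ while $g_0$ rises from $0$ and then descends to $pK - q$. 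Since you explicitly defer this step as ``the hard part,'' and the one computation you make there points you toward a nonexistent obstruction, the proposal as written has a genuine gap at the heart of the lemma, even though the surrounding framework matches the paper's proof.
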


\noindent The above construction also takes care of the main step in the proof of Theorem \ref{symp filling 2}.

\begin{proof}[Proof of Theorem \ref{symp filling 2}]
Say, $h = T^{k_1}_{c_1} \circ T^{k_2}_{c_2} \circ \dots \circ T^{k_l}_{c_l}$. Let $m = \Sigma_{i=1}^l c_i$. Identify the $S^1$-interval of the mapping torus of $\ROB(\Sigma, \partial_{\frac{q}{p}})$ with $[0,1]/{0 \sim 1}$. Divide $[0,1]$ into sub-intervals $I_j = [\frac{j}{m},\frac{j+1}{m}]$ for $j \in {0,1,...,m-1}$. By Lemma \ref{rational filling}, we can construct a symplectic filling of $\ROB(\Sigma, \partial_{\frac{q}{p}})$. Let $W_0^4$ denote the filling. We attach $m$ Weinstein $2$-handles to $W_0^4$ along the appropriate $c_i$s, one each in the interval $I_j$ of the mapping torus of $\ROB(\Sigma,\partial_{\frac{q}{p}})$, to obtain a symplectic filling $W^4$ of $\ROB(\Sigma, h \circ \partial_{\frac{q}{p}})$.  
\end{proof}	

\begin{proof}[Proof of Theorem \ref{Stein filling 2}]
We will assume that the boundary $ \partial \Sigma$ is connected for simplicity of the argument. For multiple boundary components, our argument works near each boundary component. The boundary rotation, by hypothesis, is given by $\partial_{\frac{p}{q}} $.

We consider the honest open book $\left(\Sigma, h \right) $. By Giroux's result, for $h \in \text{Dehn}^+ (\Sigma, \partial \Sigma)$ the supported contact struture $ \xi$ admits a Stein-filling say $X$. The binding $\partial \Sigma$ has a neighborhood that is contactomorphic to the standard neighborhood $N_R$ in $\mathbb{R}^3/(z \equiv z+1) $ with the contact form $ dz + r^2 d\varphi$. By Lemma \ref{binding surgery}, the contact structure $\xi $ supported by  $ROB \left(\Sigma, h \circ \partial_{\frac{p}{q}}\right)$ is obtained by performing a $-\frac{p}{q}$ admissible transverse surgery on the binding of $OB \left( \Sigma, h \right)$. Since $-\frac{p}{q} < -1$, by Theorem \ref{BE}, we can realize this admissible transverse surgery by a sequence of Legendrian surgeries along some Legendrian link in the neighborhood of the binding. Thus, we can add Stein 2-handles to $X$ corresponding to each Legendrian surgery in the above sequence to get a Stein filling of $\xi$.      
\end{proof}

\bibliographystyle{plain} 
\bibliography{periodic_contact}

\end{document}